\newcommand{\ignore}[1]{}
\title{Projections in $L^1(G)$; the unimodular case}
\author[M. Alaghmandan]{Mahmood Alaghmandan}
\address{Department of Mathematical Sciences,
Chalmers University of Technology and the University of Gothenburg, S-412 96 G\"oteborg, Sweden}
\email{mahala@chalmers.se}
\author[M. Ghandehari]{Mahya Ghandehari}
\address{Department of Mathematical Sciences, University of Delaware, Newark, DE 19716, USA}
\email{mahya@udel.edu}
\author[N. Spronk]{Nico Spronk}
\address{Pure Mathematics, University of Waterloo,
 Waterloo, Ontario, Canada N2L 3G1}
 \email{nspronk@uwaterloo.ca}
\author[K. F. Taylor]{Keith F. Taylor}
\address{Department of Mathematics and Statistics, Dalhousie University, Halifax, Nova Scotia, Canada, B3H 3J5}
\email{keith.taylor@dal.ca}
\thanks{The first two authors were supported by  Fields institute postdoctoral fellowships and  postdoctoral fellowships at University of Waterloo. The second author was supported by an AARMS postdoctoral fellowship while at Dalhousie University. The final two authors were supported by NSERC of Canada Discovery Grants.}
\definecolor{blue1}{RGB}{32,78,170}
\definecolor{blue2}{RGB}{93,92,160}
\definecolor{blue3}{RGB}{40,51,202}
\definecolor{blue4}{RGB}{0,0,0}
\definecolor{purple1}{RGB}{128,0,128}
\date{\today}
\newtheorem{theorem}{Theorem}[section]
\newtheorem{corollary}[theorem]{Corollary}
\newtheorem{lemma}[theorem]{Lemma}
\newtheorem{proposition}[theorem]{Proposition}
\newcommand{\cal}{\mathcal}
\newcommand{\cA}{\mathcal{A}}
\newcommand{\cJ}{\mathcal{J}}
\newcommand{\gU}{{\operatorname{U}}}
\newcommand{\norm}[1]{\| #1\|}
\newcommand{\wG}{\widehat{G}}
\newcommand{\Hpi}{\mathcal{H}_{\pi}}
\newcommand{\cH}{\mathcal{H}}
\newcommand{\cT}{\mathcal{T}}
\newcommand{\sig}{\sigma}
\newcommand{\lam}{\lambda}
\newcommand{\fH}{{\cal H}}
\theoremstyle{definition}
\newtheorem{example}[theorem]{Example}
\def\SL {{\rm SL}_2({\mathbb R})}
\def\supp {{\rm supp}}
\def\H{{\cal H}}
\def\supp {{\rm supp}}
\begin{document}
\maketitle

\begin{abstract}
We consider the issue of describing all self-adjoint idempotents (projections) in $L^1(G)$ when $G$ is a unimodular locally compact group. The approach is to take advantage of known facts concerning subspaces of the Fourier-Stieltjes and Fourier algebras of $G$ and the
topology of the dual space of $G$. We obtain an
explicit description of any projection in $L^1(G)$
which happens to also lie in the coefficient space of
a finite direct sum of irreducible representations.
This leads to a complete description of all 
projections in $L^1(G)$ for $G$ belonging to a 
class of groups that includes $\SL$ and all
almost connected nilpotent locally compact groups.
\end{abstract}

\noindent {{\sc AMS Subject Classification:}  Primary 
43A20; Secondary 43A22}
\newline
{{\sc Keywords:} $L^1$-projection, locally compact group, unimodular, square-integrable representation.}

\section{Introduction}
Let $G$ be a unimodular locally compact group, and $L^1(G)$ denote the Banach   $*$-algebra
of integrable functions on $G$.  Let $M(G)$ denote the Banach
$*$-algebra  of bounded regular Borel measures on $G$, and recall that the measure algebra $M(G)$ contains $L^1(G)$ as a closed ideal.
Self-adjoint idempotents in $L^1(G)$ (respectively $M(G)$) are called $L^1$-projections (respectively projection measures).
The study of projections originated with
Rudin \cite{Rud} and Helson \cite{hel}.   A full characterization of 
idempotents of the measure algebra of a  locally compact abelian group was 
obtained in \cite{Coh} through identifying such measures with certain subsets of the dual group. Note that in the abelian case, idempotents of $M(G)$
are automatically projection measures. 
For nonabelian compact groups, 
the orthogonality relations for coefficient
functions of irreducible representations
show that any pure positive definite 
function, properly scaled, is an $L^1$-projection. 
Such a projection is strongly minimal in the
sense defined later. Conversely, every strongly minimal $L^1$-projection is just a pure positive definite coefficient function, and is associated 
uniquely (up to equivalence) with a particular irreducible
representation (namely the unique representation in its ``{support}''). Moreover, every $L^1$-projection of a compact group is just a finite sum of such strongly minimal projections. 

The support of an $L^1$-projection $p$ of a locally compact group is  the collection of (equivalence classes of) all irreducible unitary representations $\pi$ which satisfy 
$\pi(p)\neq 0$. It turns out that an  $L^1$-projection in a compact or an abelian locally compact group can be understood using its support. 
For a noncompact 
unimodular group $G$, it was  shown
in \cite{bar} and independently in \cite{val} that, similar to the case of a compact group,  strongly minimal $L^1$-projections are singly supported. That is,
for a strongly minimal projection $p\in L^1(G)$,
 there is a unique (up to equivalence
of representations) irreducible representation $\pi$ of $G$ such that $\pi(p)\neq 0$. 
Moreover, the representation $\pi$ is an 
open point in the dual space of $G$, and $p$ is nothing but a positive definite coefficient function of $\overline{\pi}$.
 
To our knowledge, the first explicit construction of projections in $L^1(G)$ for
a nonunimodular $G$ was carried out by
Eymard and Terp \cite{EyT} for the
group of affine transformations of 
any locally compact field. 
In \cite{GrKT} and \cite{KaT}, groups of the
form $G=A\rtimes H$, with $A$ abelian, were studied. The nature of the action of $H$ on $\widehat{A}$ determines whether or not 
there are nonzero projections in $L^1(G)$.
Groups of the form $A\rtimes H$ are often
nonunimodular and, although we know how to construct many examples of projections, we
are a long way from a characterization of
projections for nonunimodular groups that is
comparable to that for compact or abelian groups.

In this paper, we restrict our attention to 
unimodular groups $G$ with the purpose of
building on the results of \cite{bar} and \cite{val} and moving closer to a complete description of all
projections in $L^1(G)$. 
In particular, we study projections with finite support in detail and show that, for many groups (precisely the unimodular second-countable type I groups), the finite support of a projection identifies the smallest coefficient function space which contains the projection. This article provides partial generalizations to some earlier results about projections of certain unimodular groups. 
For $G$ a connected nilpotent group, all projections in $L^1(G)$
are explicitly described in \cite{KaT3}.
Some more 
headway was made in \cite{KaT2} for [FC$^-$]-groups; that is, groups for which every conjugacy class is relatively compact. Note
that nilpotent and [FC$^-$]-groups are
unimodular.

This article is organized as follows. We collate the necessary background and tools in Section \ref{section:notation}. In Section \ref{s:main-results} we prove that every $L^1$-projection can be represented by an element of the Fourier algebra. We then study projections that lie in certain subspaces of the Fourier algebra, namely coefficient spaces associated with  finite sums of irreducible representations. In Theorem \ref{t:projections-coefficent}, we show that every such projection is of a rather special form, { i.e.} it is just a finite sum of coefficient functions, where each summand is a strongly minimal $L^1$-projection in its own right. This in particular proves that every one of the irreducible representations has to be ``integrable''. We use the results of \cite{bar}, together with careful study of coefficient function spaces of irreducible representations, to prove this theorem. 
(It is worth mentioning that we know of no direct way to answer even a very simple version of this question, namely when the projection is assumed to be just the sum of coefficient functions of two inequivalent irreducible representations.)
In Section \ref{s:support-of-projections}, we study projections through their support, and show that in special cases, the support of the projection identifies its location in the Fourier algebra. 

As perhaps the 
most useful consequence of this study, Corollary 
\ref{high_point} provides a complete description of all 
projections in $L^1(G)$ when $G$ is a unimodular,
second countable, type I locally compact group
with the property that every compact open subset of
the dual of $G$ is a finite subset of the reduced dual.
This class of groups includes $\SL$ and any almost
connected nilpotent group.

We finally finish the paper with an application of projections to $*$-homomorphisms between $L^1$-algebras of (unimodular) locally compact groups. 


\section{Notation and background}\label{section:notation}

For the rest of this paper, $G$ is 
a unimodular locally compact group unless otherwise stated. Let Haar integration on $G$ be denoted by
$\int_G\cdots dx$.
For every function $f$ on $G$ and $y\in G$, 
define a new function $L_yf$ on $G$ such that $L_yf(x)=f(y^{-1}x)$.  The {\it convolution} of two Borel measurable functions $f$ and $g$,  written as $f*g$,   is defined by 
\[
f*g(x)=\int_G f(y) L_yg(x)\; dy
\]
if it exists. Also, the {\it involution} of a Borel measurable function $f$ is defined by $f^*(x):=\overline{f(x^{-1})}$ for every $x\in G$. (Note: If $G$ were nonunimodular, the involution would involve the modular function of $G$.)
With respect to the convolution and involution defined above, $L^1(G)$ forms a Banach $*$-algebra. 

By a {\it   representation} $\pi$ of $G$, we always mean a  homomorphism $\pi: G \rightarrow \mathcal{U}(\Hpi)$ which is SOT-continuous.
A representation $\pi$ is called {\it irreducible} if 
$\Hpi$ has  no proper non-trivial closed $\pi$-invariant subspaces. Two representations  are called {\it equivalent} if
they are unitarily equivalent. If $\pi$ is a representation of $G$, we let $\pi$ denote the integrated (in
the weak operator sense) representation to $L^1(G)$.
\ignore{, that satisfies,
for all $\xi,\eta\in\Hpi$,
\[
\langle\pi(f)\xi,\eta\rangle =
\int_Gf(x)\langle\pi(x)\xi,\eta\rangle \,dx \ \ \ \ (f\in L^1(G)).
\]}

 The space of
equivalence classes of irreducible 
representations of $G$ is denoted by $\wG$.
There is a natural topology on $\wG$ that is
not, in general, Hausdorff (see \cite{induced}). 
The support of a representation $\pi$, denoted by $\supp(\pi)$, is the set of all representations in $\widehat{G}$ which are weakly contained in $\pi$.   For the left regular representation $\lambda$ of $G$, the support of $\lambda$ is called the \emph{reduced dual} of $G$ and is denoted by $\widehat{G}_r$.   For a  detailed account of representation theory of locally compact groups see  \cite{dixmier}.

For a representation $\pi$  on $G$ and $\xi,\eta\in \Hpi$, we define the corresponding
 {\it coefficient function} to be the function $\pi_{\xi,\eta}(x):=\langle\pi(x) \xi,\eta\rangle$ , for $x\in G$.
 Let $F_\pi$ denote the linear span of
 $\{\pi_{\xi,\eta}:\xi,\eta\in\Hpi\}$. Then 
 $F_\pi$ is a subspace of the space of bounded
 continuous complex-valued functions on $G$.
 
An irreducible  representation $\pi$ is called {\it integrable} if there  exists $\xi\in \Hpi$, $\xi\neq 0$, such that $\pi_{\xi,\xi}\in L^1(G)$. 
This is equivalent to the existence of  a dense subspace $\cH'$  of $\Hpi$ such that for all $\xi,\eta\in \cH'$, the coefficient function $\pi_{\xi,\eta}$ belongs to $L^1(G)$. 
An irreducible representation $\pi$ of $G$ is said to be {\it square-integrable} if there exist  non-zero vectors  $\xi,\eta \in \Hpi$ such that  $\pi_{\xi,\eta}\in L^2(G)$. Note that every integrable representation is square-integrable but the converse is not true. When $\pi$ is a square-integrable representation of a unimodular group, every coefficient function  of $\pi$ is square-integrable. 
See
Chapter 14 of  \cite{dixmier} for the basic properties of square-integrable and integrable
representations of unimodular groups. 

Square-integrable representations satisfy   orthogonality relations similar to the ones held 
for coefficient functions of   irreducible
representations of compact groups.
{In particular,  let $\sigma=\bigoplus_{i=1}^n \pi_i$ for  mutually non-equivalent square-integrable representations $(\pi_i)_{i=1}^n$, and  $\xi_i,\xi_i', \eta_i,\eta_i' \in {\cal H}_{\pi_i}$ for $i\in 1,\ldots, n$. Then for $\xi=\bigoplus_{i=1}^n \xi_i$, $\xi'=\bigoplus_{i=1}^n \xi'_i$, $\eta=\bigoplus_{i=1}^n \eta_i$, and $\eta'=\bigoplus_{i=1}^n \eta'_i$ we have
\begin{eqnarray}
\int_{G} \langle\xi,\sigma(x)\eta\rangle \overline{\langle\xi',\sigma(x)\eta'\rangle} dx &=& \sum_{i,j=1}^n  \int_{G} \langle\xi_i,\pi_i(x)\eta_i\rangle \overline{\langle\xi_j',\pi_j(x)\eta_j'\rangle} dx \label{eq:orthogonlity-relation-unimodular}\\
&=& \sum_{i=1}^n \frac{1}{k_i}\langle\xi_i,\xi_i'\rangle \langle \eta'_i,\eta_i\rangle\nonumber
\end{eqnarray}
where each positive real quantity $k_i$ is called the \emph{formal dimension} of $\pi_i$.  For such a
representation $\sigma$, $F_\sigma$ has additional structure. With $\xi,\eta,\xi',\eta'$ as above,
\begin{equation}\label{eq:convolution-of0coefficient}
\sigma_{\xi,\eta}* \sigma_{\xi',\eta'} = \sum_{i=1}^n \frac{1}{k_i} \langle \xi_i,\eta'_i\rangle \pi_{i,\xi'_i,\eta_i}  \in F_\sigma,
\end{equation}
where $\pi_{i,\xi_i,\eta_i} =\langle \pi_i(\cdot)\xi_i,\eta_i\rangle$.
Using  (\ref{eq:convolution-of0coefficient}), we observe that if $G$ is unimodular and $\sigma$ is a  {direct sum of finitely many   square-integrable representations},   then  $F_\sigma$ forms a $*$-algebra, where $*$ is the involution.

 For a locally compact group $G$, let $B(G)$ denote the set of all coefficient functions generated by  representations of $G$. Eymard first introduced $B(G)$  for a general locally compact group in \cite{eymard}.  Clearly, $B(G)$ is  an algebra with respect to the pointwise operations. Eymard showed  that $B(G)$ is in fact a Banach algebra with the norm defined as follows. For each $u\in B(G)$, $\norm{u}_{B(G)}:=\inf \norm{\xi}\norm{\eta}$ where the infimum is taken over all possibilities of representations $\sigma$ of $G$ and $\xi,\eta\in \H_\sigma$ with $u(x)=\langle\sigma(x)\xi,\eta\rangle$. The Banach algebra $B(G)$ is called the {\it Fourier-Stieltjes algebra} of the group $G$. Further,  $B(G)$ is invariant with respect to left and right translations by elements of $G$.

For a representation $\sigma$ of $G$, $F_\sigma$ is a subspace of $B(G)$ which is not necessarily closed. The closure of $F_\sigma$ with respect to the norm of $B(G)$ is denoted by $A_\sigma(G)$, or $A_\sigma$ when there is no risk of confusion. 
These subspaces were defined and studied by Arsac in \cite{arsac} where it was shown that
$$A_\sigma=\left\{\sum_{j=1}^\infty\sigma_{\xi_j,\eta_j}:\xi_j ,\eta_j  \in  {\cal H}_{\sigma},  \sum_{j=1}^\infty \|\xi_j\|\|\eta_i\|<\infty\right\}.$$
In addition, every $u$ in $A_\sigma$ can be represented as $u=\sum_{i=1}^\infty \sigma_{\xi_i,\eta_i}$ in such a way that 
\[
\|u\|_{B(G)}= \sum_{j=1}^\infty \|\xi_j \|\|\eta_j \|.
\]
 Moreover, the subspace $A_\sigma$ can be realized as a quotient of the trace class operator algebra of ${\cal H}_{\sigma}$, ${\cal T}(\cH_\sigma)$,  through 
the map $\psi$  defined as
\[
\psi:  {\cal T}( {\cal H}_{\sigma})   \longrightarrow  A_\sigma, \; \;
  \psi(T)(x) =\operatorname{Tr}(T \sigma(x)), 
\]
for every $T \in  {\cal T}( {\cal H}_{\sigma})$ and  $x\in G$.
In the special case where $\sigma$ is irreducible, the above map defines an isometry. In particular, we conclude that $\| \sigma _{\xi ,\eta }\|_{B(G)}= \|\xi \|\|\eta \|$.} 
In our computations, we will use the following proposition which is merely a weaker version of Corollaire (3.13) of \cite{arsac}.
\begin{proposition}\label{p:Arsac-direct-sum}
Let $\sigma=\bigoplus_{i\in I}\pi_i$, where $\{\pi_i:\, i\in I\}$ is a collection of non-equivalent irreducible representations of $G$. Then $A_\sigma=\ell^1\text{-}\bigoplus_{i\in I} A_{\pi_i}$.
\end{proposition}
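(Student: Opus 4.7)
The plan is to realize $A_\sigma$ as the predual of $\sigma(G)''$, decompose this von Neumann algebra using the fact that inequivalent irreducibles are disjoint, and then take preduals termwise.

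\smallskip

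\noindent\textbf{Step 1 (identify the dual).} From the isometric identification $A_\sigma \cong \mathcal{T}(\mathcal{H}_\sigma)/\ker\psi$ recorded in the excerpt, taking the Banach space dual gives $A_\sigma^* \cong (\ker\psi)^\perp \subseteq B(\mathcal{H}_\sigma)$. A short computation shows this annihilator is precisely the von Neumann algebra $\sigma(G)''$ generated by $\sigma(G)$; equivalently one can argue via $B(G)^* = W^*(G)$ that the dual of $A_\sigma$ is the weak-$*$ closure of $\sigma(L^1(G))$ in $B(\mathcal{H}_\sigma)$.

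\smallskip

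\noindent\textbf{Step 2 (decompose $\sigma(G)''$).} Each $\pi_i$ is irreducible, so the double-commutant theorem gives $\pi_i(G)'' = B(\mathcal{H}_{\pi_i})$. Because the $\pi_i$ are mutually inequivalent irreducibles, Schur's lemma guarantees no nonzero intertwiners between different summands, so the orthogonal projection $P_i$ of $\mathcal{H}_\sigma$ onto $\mathcal{H}_{\pi_i}$ lies in $\sigma(G)'$ and hence is central in $\sigma(G)''$. The $P_i$ therefore give a von Neumann algebra decomposition
\[
\sigma(G)'' \;=\; \ell^\infty\text{-}\bigoplus_{i\in I}\pi_i(G)'' \;=\; \ell^\infty\text{-}\bigoplus_{i\in I} B(\mathcal{H}_{\pi_i}).
\]

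\smallskip

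\noindent\textbf{Step 3 (take preduals).} The predual of an $\ell^\infty$-direct sum of von Neumann algebras is the $\ell^1$-direct sum of the preduals, so the predual of the decomposition in Step 2 is $\ell^1\text{-}\bigoplus_i \mathcal{T}(\mathcal{H}_{\pi_i})$. Using the excerpt's observation that the map $\psi$ is an \emph{isometry} in the irreducible case, so that $A_{\pi_i} \cong \mathcal{T}(\mathcal{H}_{\pi_i})$ isometrically, one concludes $A_\sigma \cong \ell^1\text{-}\bigoplus_i A_{\pi_i}$. A final sanity check, using the block decomposition of trace-class operators on $\mathcal{H}_\sigma$ and the formula $\psi(T)(x)=\operatorname{Tr}(T\sigma(x))$, confirms that the induced isometry sends $(u_i)_i$ to the pointwise (and $B(G)$-norm) convergent sum $\sum_i u_i$.

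\smallskip

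The main obstacle is Step 2: establishing that inequivalence forces the block projections $P_i$ to be central in $\sigma(G)''$. Once this is in hand, everything else is a formal duality computation plus the isometric identifications already cited from Arsac. A more hands-on alternative would construct the two maps directly, using the representation $u=\sum_j \sigma_{\xi_j,\eta_j}$ with $\|u\|_{B(G)}=\sum_j \|\xi_j\|\|\eta_j\|$ and the Cauchy--Schwarz bound $\sum_i \|\xi_{j,i}\|\|\eta_{j,i}\| \le \|\xi_j\|\|\eta_j\|$ coming from the block decomposition $\xi_j=\bigoplus_i \xi_{j,i}$, $\eta_j=\bigoplus_i \eta_{j,i}$; however, injectivity of the resulting forward map still boils down to the same disjointness statement for inequivalent irreducibles.
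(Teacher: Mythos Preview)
The paper does not prove this proposition; it simply records it as ``a weaker version of Corollaire~(3.13) of \cite{arsac}'' and moves on. So there is no in-paper argument to compare against.

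Your approach is correct and is essentially the standard one (and close in spirit to Arsac's): identify $A_\sigma$ with the predual of $\sigma(G)''$, compute $\sigma(G)''=\ell^\infty\text{-}\bigoplus_i B(\mathcal{H}_{\pi_i})$ via Schur's lemma applied to the pairwise inequivalent irreducibles, and take preduals to land in $\ell^1\text{-}\bigoplus_i \mathcal{T}(\mathcal{H}_{\pi_i})\cong\ell^1\text{-}\bigoplus_i A_{\pi_i}$. One small wording point in Step~2: from $P_i\in\sigma(G)'$ you get that $P_i$ commutes with $\sigma(G)''$, but ``central in $\sigma(G)''$'' also requires $P_i\in\sigma(G)''$. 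This is automatic here because inequivalence forces $\sigma(G)'=\ell^\infty\text{-}\bigoplus_i\mathbb{C}I_{\mathcal{H}_{\pi_i}}$, which is abelian, so $\sigma(G)'\subseteq(\sigma(G)')'=\sigma(G)''$; you should make that explicit. The remaining check---that the abstract predual isomorphism agrees with the concrete map $(u_i)_i\mapsto\sum_i u_i$ and is an isometry for the $B(G)$-norm---is handled by your sanity check and the quoted fact that $\psi$ is isometric on each irreducible block.
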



If $\lambda$ denotes  the left regular representation of $G$, $A_\lambda$ turns out to be a closed ideal of $B(G)$, and is simply  denoted by $A(G)$.  
The algebra $A(G)$, called the {\it Fourier algebra} of $G$,  was also introduced by  Eymard in \cite{eymard}. In particular,  Eymard  proved that each element of $A(G)$ can be written in the form of a coefficient function of $\lambda$, that is  $\lambda_{f,g}$ for some $f,g \in L^2(G)$.
 
An element $p$ in $L^1(G)$ is called a 
{\it projection} if $p*p=p=p^*$; that is, if
$p$ is a self-adjoint idempotent in $L^1(G)$.
Let $\mathcal{P}L^1(G)$ denote the set of
projections in $L^1(G)$.

For $p\in \mathcal{P}L^1(G)$, define the \emph{support} of $p$ to be 
$S(p):=\{\pi\in\wG:\pi(p)\neq 0\}$. For any
$p\in \mathcal{P}L^1(G)$, $S(p)$ is a compact
open subset of $\wG$ (see 3.3.2 and 3.3.7 of \cite{dixmier}), but $S(p)$ is not necessarily closed (see \cite[Example~2]{KaT3}). Thus, if $\wG$ has no
nonempty compact open subsets, then 
$\mathcal{P}L^1(G)=\{0\}$.

The set $\mathcal{P}L^1(G)$  carries a partial order $\leq$ that is  $q\leq p$ if $q*p=q$
(or equivalently $p*q=q$) for  $p,q\in 
\mathcal{P}L^1(G)$.
A nonzero $p\in\mathcal{P}L^1(G)$ is called a {\it minimal projection} if for any other 
$q\in\mathcal{P}L^1(G)$, $q\leq p$  implies that $q$ is either $p$ or $0$. A projection $p$ in $L^1(G)$ is called {\it strongly minimal} if the left ideal $L^1(G)*p$ is a minimal left ideal in $L^1(G)$. Equivalently,  for a strongly minimal projection $p$, 
\begin{equation}\label{eq:alpha-map}
p*f*p=\alpha_f p
\end{equation} 
where $\alpha_f\in\mathbb{C}$, for every $f\in L^1(G)$. 
It is clear that  strong minimality implies  minimality of a  projection. But the following example shows that the  converse is not true.

\begin{example}\label{min-stmin}
\ignore{Let $G$ be a non-compact locally compact abelian group, admitting a compact open subgroup for 
which $K^\perp\cong\widehat{G/K}$ is a non-singleton connected group.  
Then for any $\chi\in\widehat{K}$, the measure $\chi m_K$ is a minimal but not strongly minimal 
idempotent.  Indeed $(\chi m_K)\ast L^1(G)=\{f\in L^1(G):f(sk)=\chi(k)f(s)\text{ for all }
s\text{ in }G,\,k\text{ in }K\}$.  This can be routinely checked to have image, under the Fourier 
transform, $1_{\chi K^\perp}A(\widehat{G})\cong A(\widehat{G/K})$.  The connectivity assumption
of $\widehat{G/K}$ shows that this algebra admits no proper projections, so there are no projections
properly dominated by $\chi m_K$ in $L^1(G)$.
For instance, let $G=\Bbb{Z}_2 \times \Bbb{Z}$ for $K=\Bbb{Z}_2$, the cyclic group of  order two. Therefore, $\wG=\Bbb{Z}_2 \times \Bbb{T}$. Based on the characterization of projections, $L^1(G)$ has two minimal projections $\frac{1}{2} (\delta_{(0,0)} \pm \delta_{(1,0)})$ which are corresponding to connected  compact open subsets $\{0\}\times \Bbb{T}$ and $\{1\}\times \Bbb{T}$. But neither of these two projections are  strongly minimal.}
Let $G$ be a noncompact abelian group whose dual $\widehat{G}$ admits a connected open compact subgroup $K$, whence $G$ itself admits a compact open subgroup $K_\perp$ which has at least two distinct elements. Then by standard theory of the Fourier transform, the normalized Haar measure $m_{K_\perp}$ admits transform the indicator function $1_K$, and is clearly minimal, but not strongly minimal.
\end{example}

Strongly minimal projections have been studied in \cite{bar} and \cite{val} where they were called ``minimal". Since in this article we study two types of minimality for projections,  namely minimal and strongly minimal, we use different terminology. 
It has been shown that, for unimodular groups,  there is a one-to-one correspondence between the set of equivalence classes of integrable representations  and   strongly minimal projections. 
We will clarify 
the relation between a strongly minimal
projection and the corresponding 
irreducible representation, for
unimodular groups, in Section~\ref{s:main-results}.





\section{Main results}\label{s:main-results}
Our objective for this paper is to study $L^1$-projections of  unimodular groups.  Our motivation is the result of Barnes in \cite{bar} which states that every strongly minimal $L^1$-projection of a unimodular group is a coefficient function of an  integrable representation.    In particular, strongly minimal projections of $L^1(G)$ lie
 in some $A_\pi$
with $\pi$ integrable (which implies 
$A_\pi\subseteq A(G)$). 

We begin this section with an analogue  key observation on idempotents in $L^1(G)$. Note that   unimodularity of $G$  guarantees that many of the significant dense left ideals of $L^1(G)$ are (two-sided) ideals. 
Let us recall that an element $p$ of an algebra $\cA$ is called an {idempotent} if  $p^2=p$.
%
%
%
%
The  following proposition is formerly proved in  \cite[Theorem~8]{kotz} in a more general setting. We present the  proof here to be self-contained. 

\vskip1.0em

\begin{proposition}\label{p:every-idempotent}
Let $G$ be a unimodular locally compact group and $p$ be an idempotent in $L^1(G)$. Then, $p\in A(G)\cap L^r(G)$ for  every $1<r<\infty$.
\end{proposition}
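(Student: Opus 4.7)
The plan is to reduce everything to showing that $p\in L^2(G)$. Granting this, unimodularity gives $\|p^*\|_2=\|p\|_2$, so $p^*\in L^2(G)$ as well. Writing $\tilde{\eta}(y):=\overline{\eta(y^{-1})}=\eta^*(y)$, the convolution $\xi*\tilde{\eta}$ is (up to standard sign/conjugation conventions) a coefficient function of the left regular representation $\lambda$ with $L^2$-vectors, so $\xi*\tilde{\eta}\in A(G)$ whenever $\xi,\eta\in L^2(G)$. Taking $\xi:=p$ and $\eta:=p^*$ gives $\tilde{\eta}=\widetilde{p^*}=p$, and therefore
\[
p=p*p=p*\tilde{\eta}\in A(G).
\]
Since $A(G)\subseteq C_0(G)\subseteq L^\infty(G)$ and $p\in L^1(G)$, the log-convexity bound $\|p\|_r\le\|p\|_1^{1/r}\|p\|_\infty^{1-1/r}$ yields $p\in L^r(G)$ for every $1\le r\le\infty$, which is stronger than the asserted range $1<r<\infty$.

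The heart of the argument is therefore to establish $p\in L^2(G)$. The operator $T:=\lambda(p):L^2(G)\to L^2(G)$ of left convolution by $p$ is bounded (with $\|T\|\le\|p\|_1$) and idempotent, since $T^2=\lambda(p*p)=\lambda(p)=T$; its range is a closed right-translation-invariant subspace of $L^2(G)$. Pick a bounded approximate identity $(e_\alpha)\subset C_c(G)\subseteq L^1(G)\cap L^2(G)$ for $L^1(G)$, and set $p_\alpha:=e_\alpha*p$. Then $p_\alpha\in L^1(G)\cap L^2(G)$ by Young's inequality (which uses unimodularity), $p_\alpha\to p$ in $L^1$-norm, and idempotence of $p$ gives $p_\alpha*p=p_\alpha$. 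The goal is the uniform bound $\sup_\alpha\|p_\alpha\|_2<\infty$; together with the $L^1$-convergence $p_\alpha\to p$ and weak-$*$ compactness of norm-bounded sets in $L^2(G)$, this pins down $p$ as an element of $L^2(G)$.

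Obtaining this uniform $L^2$-bound is the main obstacle, and it is where unimodularity enters essentially. The idea is to exploit the semifiniteness of the group von Neumann algebra $\operatorname{VN}(G)$ together with its canonical Plancherel trace $\tau$: the bounded idempotent $T=\lambda(p)$ lies in $\overline{\lambda(L^1(G))}\subseteq\operatorname{VN}(G)$, and its range projection is a projection in $\operatorname{VN}(G)$ whose $\tau$-trace must be finite, which via the Plancherel identification forces $p\in L^2(G)$. A version of this argument is carried out in a more general Segal-algebra framework in \cite{kotz}, Theorem~8, which is precisely what the authors invoke here.
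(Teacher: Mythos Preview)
Your overall strategy --- reduce everything to $p\in L^2(G)$, then realize $p=p*p$ as a coefficient of $\lambda$ to land in $A(G)$, then interpolate between $L^1$ and $L^\infty$ to get all $L^r$ --- is sound, and the last two steps are correct as written. However, this is \emph{not} the route the paper takes, and your justification of the crucial first step is incomplete.

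The paper's proof is a purely algebraic Neumann-series trick that bypasses $L^2$ and von Neumann algebra machinery entirely. For any dense two-sided ideal $\cJ\subseteq L^1(G)$ (here $\cJ=A(G)\cap L^1(G)$ or $\cJ=L^1(G)\cap L^r(G)$; unimodularity enters only to make these two-sided), pick $u\in\cJ$ with $\|u-p\|_1<\|p\|_1^{-1}$, set $b=\sum_{n\geq 1}(p-u*p)^{*n}\in L^1(G)$, and verify directly that $p=u*p+b*u*p\in\cJ$. This is elementary and handles $A(G)$ and every $L^r$ in one stroke, whereas your approach needs the special Hilbert-space structure of $L^2$ and then bootstraps. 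Note also that the coefficient-function step you use to pass from $L^2$ to $A(G)$ is exactly what the paper does in the proof of the \emph{next} proposition, not this one.

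The gap in your argument is the bald assertion that the range projection of $T=\lambda(p)$ has finite Plancherel trace. Nothing you wrote (the approximate identity $p_\alpha=e_\alpha*p$, or mere membership $T\in\overline{\lambda(L^1(G))}\subseteq\operatorname{VN}(G)$) forces this: a bounded idempotent in a semifinite von Neumann algebra can perfectly well have range projection of infinite trace. What actually makes it work is that $T$ is an idempotent in the $C^*$-algebra $C^*_r(G)$, hence lies in its Pedersen ideal, which is contained in the definition ideal of any densely defined lower semicontinuous trace; from this one gets $\tau(T^*T)<\infty$ and hence $p\in L^2(G)$. That argument is valid but requires nontrivial $C^*$-input you did not supply. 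Finally, your closing remark that the trace route ``is precisely what the authors invoke here'' is inaccurate: the authors cite \cite{kotz} only as prior work and instead present the self-contained Neumann-series proof described above.
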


\begin{proof}
Suppose $p$ is a non-zero  idempotent in $L^1(G)$. Let $\cJ$ be any one of the  ideals     $A(G) \cap L^1(G)$ or $L^1(G) \cap L^r(G)$ for some $1\leq r <\infty$.
Since $\cJ$ is dense in $L^1(G)$, there is some $u\in \cJ$ such that $\norm{u-p}_1< \|p\|_1^{-1}$. Define $b:=\sum_{n=1}^\infty (p-u*p)^{*n} \in L^1(G)$, where ${*n}$ denotes the $n$-fold convolution.
Note that $b*p=b$. Moreover, $b*(p-u*p)=b - (p-u*p)$. Therefore, $u*p+(b-b*(p-u*p))=p$. On the other hand, $u*p+(b-b*(p-u*p))=u*p+ b*u*p \in \cJ$. This implies that $p\in \cJ$.
\end{proof}

Note that the proof of Proposition~\ref{p:every-idempotent}   does not work  for nonunimodular locally compact groups, as having two-sided ideals is  essential for the proof. 

The following gives most of \cite[Theorem 1]{bar}, but from a perspective more suitable to our needs.

\begin{proposition}\label{p:stongly-minimal-integrable}
Let $G$ be a unimodular locally compact group and $p$ a projection in $L^1(G)$.
\begin{itemize}
\item[(i)]{Let $\pi=\lambda(\cdot)|_{L^2(G)\ast\overline{p}}$.  Then $p\in A_{\pi}$.}
\item[(ii)]{If $p$ is strongly minimal, then $\pi$ is irreducible and integrable. Further, $p=\pi_{\overline{p},\overline{p}}$.}
\end{itemize}
\end{proposition}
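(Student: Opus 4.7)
The strategy is to establish (i) by exhibiting $p$ directly as a coefficient function of $\pi$, and then to deduce (ii) by showing that strong minimality forces $\pi(\bar{p})$ to be a rank-one projection.

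For (i), Proposition~\ref{p:every-idempotent} puts $p$ in $L^1(G)\cap L^2(G)$, while self-adjointness of $p$ gives $\bar{p}(x)=p(x^{-1})$, from which one verifies that $\bar{p}$ is also a self-adjoint idempotent in $L^1(G)\cap L^2(G)$. Because $G$ is unimodular, the right convolution operator $R_{\bar{p}}\colon f\mapsto f*\bar{p}$ on $L^2(G)$ is a self-adjoint idempotent, and hence an orthogonal projection with closed range $L^2(G)*\bar{p}=\Hpi$. This subspace is plainly $\lambda$-invariant and contains $\bar{p}=\bar{p}*\bar{p}$. A direct computation using unimodular translation invariance and the identity $\overline{p(z)}=p(z^{-1})$ shows that $\pi_{\bar{p},\bar{p}}(x)=\langle\lambda(x)\bar{p},\bar{p}\rangle_{L^2}$ equals $(p*p)(x)=p(x)$, so $p=\pi_{\bar{p},\bar{p}}\in F_\pi\subseteq A_\pi$. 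This already yields the final assertion of (ii).

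Now assume $p$ is strongly minimal. Conjugating the identity $p*f*p=\alpha_f p$ shows that $\bar{p}$ is strongly minimal as well, so that $\bar{p}*g*\bar{p}\in\mathbb{C}\bar{p}$ for every $g\in L^1(G)$. The plan is to extend this relation to all $g\in L^2(G)$ by approximating $g$ by elements of $L^1(G)\cap L^2(G)$ and invoking the $L^2$-boundedness of both left and right convolution by $\bar{p}\in L^1(G)$ (Young's inequality, valid here because $G$ is unimodular). Since $\pi(\bar{p})\xi=\bar{p}*\xi$ for $\xi\in\Hpi$, and since every such $\xi$ can be written as $h*\bar{p}$ with $h\in L^2(G)$, we obtain $\pi(\bar{p})\Hpi\subseteq\mathbb{C}\bar{p}$. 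Combined with $\pi(\bar{p})\bar{p}=\bar{p}$ and the fact that $\pi(\bar{p})$ is a self-adjoint idempotent, this identifies $\pi(\bar{p})$ as the orthogonal projection onto $\mathbb{C}\bar{p}$.

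Irreducibility of $\pi$ then follows by a standard argument. The vector $\bar{p}$ is cyclic for $\pi$: $L^1(G)*\bar{p}$ is dense in $\Hpi$, and for $f\in L^1(G)$, $f*\bar{p}=\int_G f(y)\lambda(y)\bar{p}\,dy$ lies in the closed span of $\{\lambda(y)\bar{p}:y\in G\}$. For any nonzero closed $\pi$-invariant subspace $K\subseteq\Hpi$, either $\pi(\bar{p})K\neq 0$, whence $\mathbb{C}\bar{p}\subseteq K$ and cyclicity forces $K=\Hpi$; or $\pi(\bar{p})$ vanishes on $K$, in which case the same reasoning applied to the invariant subspace $K^\perp$ yields $\bar{p}\in K^\perp$ and thus $K=0$. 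Integrability of $\pi$ is then automatic since $\pi_{\bar{p},\bar{p}}=p\in L^1(G)$ and $\bar{p}\neq 0$. The main obstacle in the plan is the extension of the strong-minimality identity from $L^1(G)$ to $L^2(G)$, which is needed to pin down the rank of $\pi(\bar{p})$ and which relies crucially on unimodularity.
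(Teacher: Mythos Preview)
Your argument for (i) coincides with the paper's: both use Proposition~\ref{p:every-idempotent} to place $p$ in $L^2(G)$, observe that $\check{p}=\overline{p}$ by self-adjointness and unimodularity, and compute $p=p\ast p=\langle\lambda(\cdot)\overline{p},\overline{p}\rangle$.

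For (ii) you take a genuinely different route from the paper. The paper first identifies the linear functional $f\mapsto\alpha_f$ with an element $u\in L^\infty(G)$, shows via a weak$^*$ approximation that $u=\|p\|_2^{-2}\,\overline{p}$, and then, for \emph{any} representation $\sigma$ with $\sigma(\overline{p})\neq 0$, builds an explicit isometric intertwiner $L^2(G)\ast\overline{p}\to\cH_\sigma$ sending $f\ast\overline{p}$ to $\sigma(f)\xi$. Choosing $\sigma$ irreducible then forces $\pi$ to be irreducible. Your approach instead extends the strong-minimality relation $\overline{p}\ast g\ast\overline{p}\in\mathbb{C}\overline{p}$ from $g\in L^1(G)$ to $g\in L^2(G)$ by density and Young's inequality, concludes that $\pi(\overline{p})$ is the rank-one projection onto $\mathbb{C}\overline{p}$, and then runs a direct invariant-subspace argument using cyclicity of $\overline{p}$. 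This is correct and arguably more elementary: it avoids the $L^\infty$-duality detour and the intertwiner construction. What the paper's argument buys, however, is strictly more information: it shows that $\pi$ embeds as a subrepresentation of \emph{every} $\sigma$ with $\sigma(\overline{p})\neq 0$, which is precisely the statement that $S(p)$ is a singleton, a fact invoked later in the proof of Theorem~\ref{t:projections-coefficent}(iv). It also pins down the constant $\alpha_f=\|p\|_2^{-2}\int_G f\,\overline{p}$ explicitly. Your proof establishes exactly what the proposition asserts, but does not immediately yield these extras.
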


\begin{proof}
 Since $G$ is unimodular, and $p^*=p$, we have that $\check{p}$ is equal to $\overline{p}$ a.e. where $\check{p}(s):=p(s^{-1})$  and $\overline{p}(s):=\overline{p(s)}$.  Proposition~\ref{p:every-idempotent} tells us that $p\in A(G)\cap L^2(G)$.  Hence
we have
\[
p=p\ast p=p\ast\check{\overline{p}}=\langle p,\lambda(\cdot)p\rangle=\langle \lambda(\cdot)\overline{p},\overline{p}\rangle,
\]
which gives (i).

Let $p$ be a strongly minimal projection. Let $u$ be the element   in $L^\infty(G)$ which is associated with the linear functional $f \mapsto \alpha_f$  defined in  (\ref{eq:alpha-map}), { i.e.}  
\begin{equation}\label{eq:defining-u}
p\ast f\ast p=(\int_G uf)\,p\ \ \ \ \  (\text{for $f\in L^1(G)$}).
\end{equation}
  Notice that $\int_G u(p\ast f)=\int_G (\check{p}\ast u)f=\int_G (\overline{p}\ast u)f$. 
So for every $f\in L^1(G)$, 
\[
(\int_G  uf)\, p = p*f*p= p*(p*f)*p= (\int_G  u (p*f)) \, p = (\int_G (\overline{p}\ast u)f) \,p\]
which implies that   $\overline{p}\ast u=u$.  Likewise $u\ast\overline{p}=u$.  Now if $(u_\iota)$ is a net from $C_c(G)$
which is weak$^*$ convergent  to $u$, then we have
\[
u=\overline{p}\ast u\ast\overline{p}=\text{w*-}\lim_\iota \overline{p}\ast u_\iota \ast\overline{p}
=\lim_\iota\left(\int_G u_\iota\overline{u}\right)\overline{p}.
\]
In particular $\alpha:=\lim_\iota\left(\int_G u_\iota\overline{u}\right)$ exists and $u=\alpha\overline{p}$.
But $p=p\ast p\ast p=\alpha(\int_G p\overline{p})p$, so $\alpha=\|p\|_2^{-2}$. 

Now we follow a procedure in [2].  If $\sigma$ is any representation for which
$\sigma(\overline{p})\not=0$, find  $\xi$ in $\cH_\sigma$ such that $\sigma(\overline{p}) \xi=\xi$ and $\norm{\xi}^2=\alpha^{-1}$.  Interchanging roles of $p$ and $\overline{p}$ in (\ref{eq:defining-u}), we have for $f$ in $L^1(G)$ that
\[
\| \sigma(f)\xi\|^2=\langle \sigma(\overline{p}\ast f^*\ast f\ast \overline{p})\xi,\xi\rangle
=\alpha\norm{\xi}^2\int_G (f^*\ast f)\,p=\langle f^*\ast f\ast \overline{p},\overline{p}\rangle
=\|f\ast\overline{p}\|_2^2
\]
where the fact that $p=\pi_{\overline{p},\overline{p}}$ was used in the penultimate equality.
Hence $U:L^1(G)\ast\overline{p}\to \cH_\sigma$ given by $U(f\ast\overline{p})=\sigma(f)\xi$ extends
to an isometry from $L^2(G)\ast\overline{p}$ to $\cH_\sigma$ which intertwines $\pi$ and $\sigma$.
In particular with choice of irreducible $\sigma$, we see that $\pi$ is necessarily irreducible as well.
Since $p=\pi_{\overline{p},\overline{p}}$ is integrable, $\pi$ is an integrable representation.
\end{proof}

The following remark gives the converse to (ii), above.

\begin{remark}\label{p:strongly-minimal}
Let $G$ be a unimodular locally compact group and $\pi$ be
an integrable irreducible representation of $G$.  Then there is a dense subspace
$\cH_\pi'$ of $\cH_\pi$ consisting of elements $\xi$ for which $\pi_{\xi,\xi}$ is a multiple
of a projection $p_\xi$ in $L^1(G)$.  Furthermore, by the calculation of \cite[Lemma 2.2]{val}, each $p_\xi$ is strongly minimal.
\end{remark}

\medskip

\ignore{
\begin{proof}
  The dense subspace $\cH_\pi'$ is exactly the one mentioned in the introduction.
For each $\xi$ in $\cH_\pi'$, ${\pi_{\xi,\xi}}^*=\pi_{\xi,\xi}$ in $L^1(G)$, and the orthogonality 
relation  (\ref{eq:convolution-of0coefficient})   implies that $\pi_{\xi,\xi}\ast \pi_{\xi,\xi}=\frac{\|\xi\|^2}{k_\pi}\pi_{\xi,\xi}$.
So  $p_\xi=\frac{k_\pi}{\|\xi\|^2}\pi_{\xi,\xi}$ is a projection.  Moreover, the calculation of \cite[Lemma 2.2]{val} shows that 
$p_\xi$ is strongly minimal. 
\end{proof}
}

The preceding  propositions give a  description of strongly minimal projections. 
{In what follows, we will study projections in $L^1(G)$ given that they belong to certain subspaces of the Fourier algebra.} We begin with the following lemma which says that, similar to the compact case, $A_\pi$ for a square-integrable representation $\pi$, is a Banach $*$-algebra with respect to convolution.

\begin{lemma}\label{l:unimodualr}
Let $\sig=\bigoplus_{i=1}^n \pi_i$ for  square-integrable representations $\pi_i$ of a unimodular locally compact group $G$.   Then  $A_\sig \subseteq L^2(G)$ and $\sqrt{k_\sigma}\norm{\cdot}_2\leq \norm{\cdot}_{B(G)}$ on $A_\sig$ where $k_\sigma=\min\{k_{\pi_i}: i=1,\ldots,n\}$. Furthermore, $(A_\sigma, k_\sigma^{-1}\norm{\cdot}_{B(G)})$ is a Banach $*$-algebra when it is equipped with  convolution.
\end{lemma}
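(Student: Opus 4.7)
The plan is to verify each of the three claims of the lemma by combining the Arsac series description of $A_\sigma$ (Proposition~\ref{p:Arsac-direct-sum}) with the orthogonality formulas (\ref{eq:orthogonlity-relation-unimodular}) and (\ref{eq:convolution-of0coefficient}).

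For the $L^2$-inclusion and the norm comparison, I first handle a single coefficient function $\sigma_{\xi,\eta}$, where $\xi=\bigoplus_i\xi_i$, $\eta=\bigoplus_i\eta_i\in\cH_\sigma$. Specializing (\ref{eq:orthogonlity-relation-unimodular}) to $\xi'=\xi$ and $\eta'=\eta$ yields
\[
\|\sigma_{\xi,\eta}\|_2^2 \;=\; \sum_{i=1}^n\frac{1}{k_i}\|\xi_i\|^2\|\eta_i\|^2\;\le\;\frac{1}{k_\sigma}\,\|\xi\|^2\|\eta\|^2.
\]
For arbitrary $u\in A_\sigma$, Arsac's theorem provides a representation $u=\sum_{j\ge 1}\sigma_{\xi_j,\eta_j}$ with $\sum_j\|\xi_j\|\|\eta_j\|=\|u\|_{B(G)}$. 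Summing the inequality above and invoking the triangle inequality in $L^2(G)$ shows that the partial sums form an absolutely convergent series in $L^2$; since this series also converges uniformly on $G$ (using $B(G)\subseteq C_b(G)$), the two limits agree, giving $u\in L^2(G)$ and $\sqrt{k_\sigma}\|u\|_2\le\|u\|_{B(G)}$.

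For the Banach $*$-algebra structure, the convolution formula (\ref{eq:convolution-of0coefficient}), combined with the fact that each $\pi_i$ is irreducible so that $\|\pi_{i,\xi_i',\eta_i}\|_{B(G)}=\|\xi_i'\|\|\eta_i\|$, gives
\[
\|\sigma_{\xi,\eta}\ast\sigma_{\xi',\eta'}\|_{B(G)}\;\le\;\sum_{i=1}^n\frac{1}{k_i}|\langle\xi_i,\eta_i'\rangle|\,\|\xi_i'\|\|\eta_i\|\;\le\;\frac{1}{k_\sigma}\,\|\xi\|\|\eta\|\|\xi'\|\|\eta'\|,
\]
the last step being two applications of Cauchy--Schwarz in the $i$-index (first to bound $|\langle\xi_i,\eta_i'\rangle|$ by $\|\xi_i\|\|\eta_i'\|$, then to bound $\sum_i\|\xi_i\|\|\eta_i'\|\|\xi_i'\|\|\eta_i\|$ by $\|\xi\|\|\eta'\|\|\xi'\|\|\eta\|$). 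Expanding two general elements $u,v\in A_\sigma$ via their Arsac representations and taking infima then yields $\|u\ast v\|_{B(G)}\le k_\sigma^{-1}\|u\|_{B(G)}\|v\|_{B(G)}$, which is precisely submultiplicativity for the rescaled norm $k_\sigma^{-1}\|\cdot\|_{B(G)}$. For closure under involution, a direct calculation shows $(\sigma_{\xi,\eta})^\ast=\sigma_{\eta,\xi}$, so $A_\sigma$ is $*$-invariant and the $B(G)$-norm is preserved under $\ast$. Combined with the fact that $A_\sigma$ is already a Banach space in $\|\cdot\|_{B(G)}$ (by definition, as the closure of $F_\sigma$), this establishes the Banach $*$-algebra claim.

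The main care required here is purely bookkeeping: one must carefully distinguish the internal indexing $i=1,\dots,n$ (reflecting the direct-sum structure of a single vector in $\cH_\sigma$) from the series indexing $j\ge 1$ (coming from the Arsac representation of an element of $A_\sigma$), and apply Cauchy--Schwarz appropriately to reduce sums like $\sum_i\|\xi_i\|\|\eta_i\|$ to $\|\xi\|\|\eta\|$. I do not anticipate any substantive obstacle beyond this.
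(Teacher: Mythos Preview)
Your argument is correct. For the $L^2$-inclusion and norm comparison, your route is essentially the paper's: bound a single coefficient via the orthogonality relations, then pass to the Arsac series.

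For the submultiplicativity, however, you take a genuinely different path. You expand $u\ast v$ as a double series of convolutions of coefficient functions, apply (\ref{eq:convolution-of0coefficient}) term by term, and control each term by Cauchy--Schwarz in the $i$-index. The paper instead \emph{reuses} the $L^2$-bound just proved: since $u,v\in A_\sigma\subseteq L^2(G)$ and $G$ is unimodular, one has $u\ast v=\langle\lambda(\cdot)\check v,\overline{u}\rangle$, so
\[
\|u\ast v\|_{B(G)}\le\|\overline{u}\|_2\,\|\check v\|_2\le k_\sigma^{-1}\,\|u\|_{B(G)}\|v\|_{B(G)}.
\]
This two-line argument avoids the double-series bookkeeping entirely and explains conceptually why the same constant $k_\sigma$ governs both inequalities. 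Your approach, by contrast, stays inside $A_\sigma$ throughout and makes closure of $A_\sigma$ under convolution transparent (the paper leaves this implicit, relying on $F_\sigma\ast F_\sigma\subseteq F_\sigma$ plus density). Either route suffices; the paper's is shorter, yours is more self-contained.
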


\begin{proof}
By orthogonality relations stated in   (\ref{eq:orthogonlity-relation-unimodular}), for each $\xi=\bigoplus_{i=1}^n \xi_i, \eta=\bigoplus_{i=1}^n \eta_i \in \bigoplus_{i=1}^n \cH_{\pi_i}$, $\sigma_{\xi, \eta} \in L^2(G)$,  
\[
\norm{\sigma_{\xi,\eta}}_2^2= \sum_{i=1}^n\frac{1}{k_{\pi_i}} \norm{\xi_i}^2\norm{\eta_i}^2  \leq \frac{1}{k_\sigma} \left(\sum_{i=1}^n  \norm{\xi_i} \norm{\eta_i} \right)^2 \leq \frac{1}{k_\sigma} \norm{\sigma_{\xi,\eta}}_{B(G)}^2,
\]
where we used Proposition~\ref{p:Arsac-direct-sum} in the last inequality.
Let $u=\sum_{k=1}^\infty \sigma_{\xi_k, \eta_k} \in A_\sigma$ be represented such that $\norm{u}_{B(G)}=\sum_{k=1}^\infty \norm{\xi_k}\norm{\eta_k}$. 
Then, 
\[
\norm{u}_2 \leq \sum_{k=1}^\infty \norm{ \sigma_{\xi_k, \eta_k}}_2 \leq \sum_{k=1}^\infty \frac{1}{\sqrt{k_\sigma}}\norm{\sigma_{\xi_k,\eta_k}}_{B(G)}  \leq \frac{1}{\sqrt{k_\sigma }}\sum_{k=1}^\infty \norm{\xi_k}\norm{\eta_k}= \frac{1}{\sqrt{k_\sigma }}\norm{u}_{B(G)}.
\]
Therefore, $A_\sigma \subseteq L^2(G)$. Moreover, since $G$ is unimodular, for every $u,v\in A_\sigma$, their convolution is defined, and $
u*v  = \langle \lambda(\cdot) \check{v}, \overline{u}\rangle$. Thus, 
\[
\norm{u*v}_{B(G)} \leq \norm{\overline{u}}_2\norm{\check{v}}_2 \leq k^{-1}_\sigma \norm{u}_{B(G)} \norm{v}_{B(G)}.
\]
This completes the proof.
\end{proof}

\medskip
The following is a partial generalization of \cite[Theorem 3]{KaT3}, where conditions on the set $S(p)$
were assumed.  We take the perspective of assuming $p$ itself consists of certain types of
matrix coefficients.
{
\begin{theorem}\label{t:projections-coefficent}
  Let $G$ be a unimodular locally compact group.  Let $\pi_1,\dots,\pi_n$
be a family of pairwise inequivalent members of $\widehat{G}$ and $\sigma=\bigoplus_{i=1}^n\pi_i$.
If $p$ in $L^1(G)\cap A_\sigma$ is a projection which belongs to no $A_{\sigma'}$ for any
proper subrepresentation $\sigma'$ of $\sigma$, then
\begin{itemize} 
\item[(i)]{ $p=\sum_{i=1}^n p_i$ where each $p_i$ is a projection in $L^1(G)\cap A_{\pi_i}$
and $p_i\ast p_{i'}=0$ for $i\not=i'$;}
\item[(ii)]{each $p_i=\sum_{j=1}^{r_i}p_{ij}$ where each $p_{ij}$ is strongly minimal and
$p_{ij}\ast p_{ij'}=0$ if $j\not=j'$;}
\item[(iii)]{each $\pi_i$ is integrable; and}
\item[(iv)] $S(p)=\{\overline{\pi_1},\ldots,\overline{\pi_n}\}$.
\end{itemize}
\end{theorem}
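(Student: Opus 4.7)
The plan is to combine the Arsac direct-sum decomposition of $p$ with a GNS argument that forces each $\pi_i$ to be square-integrable, and then to exploit the resulting Banach $*$-algebra structure on $A_\sigma$ to split $p$ into strongly minimal summands.

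First, apply Proposition~\ref{p:Arsac-direct-sum} to write $p = \sum_{i=1}^{n} p_i$ uniquely with $p_i \in A_{\pi_i}$, and use Arsac's isometry $\psi$ to associate to each $p_i$ a trace-class operator $T_i \in \mathcal{T}(\mathcal{H}_{\pi_i})$. The hypothesis that $p$ lies in no proper $A_{\sigma'}$ forces $p_i \neq 0$ for every $i$, while the involution $\pi_{\xi,\eta} \mapsto \pi_{\eta,\xi}$ preserves each $A_{\pi_i}$, so uniqueness of the decomposition and $p = p^*$ give $p_i = p_i^*$, equivalently $T_i = T_i^*$.

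The crucial step is to show each $\pi_i$ is square-integrable. By Proposition~\ref{p:every-idempotent}, $p \in L^1(G) \cap L^2(G) \cap A(G)$, and Proposition~\ref{p:stongly-minimal-integrable}(i) provides a cyclic subrepresentation $\tau = \lambda|_{L^2(G) * \overline{p}}$ of the left regular representation with $p = \tau_{\overline{p},\overline{p}}$; thus $\tau$ is unitarily equivalent to the GNS representation $\pi_p$ of the positive definite function $p$, and in particular $\pi_p \hookrightarrow \lambda$. On the other hand, the Arsac series expansion exhibits $p$ as a coefficient of the amplification $\sigma \otimes I_{\ell^2}$, so $\pi_p$ embeds as a cyclic subrepresentation of $\bigoplus_{i=1}^{n} \pi_i \otimes I_{\ell^2}$. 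Pairwise inequivalence of the $\pi_i$ together with cyclicity of $\pi_p$ forces $\pi_p \cong \bigoplus_{i=1}^{n} \pi_i^{m_i}$ with each $m_i$ finite, and the minimality hypothesis gives $m_i \geq 1$. Consequently each $\pi_i$ is a subrepresentation of $\pi_p \subseteq \lambda$, and for a unimodular group an irreducible subrepresentation of $\lambda$ is precisely a discrete-series (hence square-integrable) representation.

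Once each $\pi_i$ is square-integrable, Lemma~\ref{l:unimodualr} makes $A_\sigma$ a Banach $*$-algebra under convolution, and (\ref{eq:convolution-of0coefficient}) immediately yields $p_i * p_j = 0$ for $i \neq j$ (the cross-terms in that formula vanish). Decomposing $p * p = p$ componentwise then gives $p_i * p_i = p_i$, establishing (i). Through $\psi$, this becomes $T_i^2 = k_{\pi_i} T_i$, so $E_i := k_{\pi_i}^{-1} T_i$ is a self-adjoint trace-class idempotent and hence a finite-rank orthogonal projection on $\mathcal{H}_{\pi_i}$; writing $E_i = \sum_{j=1}^{r_i} \xi_{ij} \otimes \xi_{ij}^*$ for an orthonormal basis of its range yields $p_i = k_{\pi_i} \sum_{j=1}^{r_i} \pi_{i,\xi_{ij},\xi_{ij}}$, and each $p_{ij} := k_{\pi_i} \pi_{i,\xi_{ij},\xi_{ij}}$ is a strongly minimal $L^1$-projection by Remark~\ref{p:strongly-minimal}, with $p_{ij} * p_{ij'} = 0$ for $j \neq j'$ coming from $\langle \xi_{ij}, \xi_{ij'}\rangle = 0$ in (\ref{eq:convolution-of0coefficient}). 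Integrability of each $\pi_i$ is then automatic since each $p_{ij}$ is a nonzero $L^1$ coefficient function, and orthogonality (\ref{eq:orthogonlity-relation-unimodular}) between inequivalent square-integrable representations gives $\rho(p_i) = 0$ unless $\rho \cong \overline{\pi_i}$, whence $S(p) = \{\overline{\pi_1}, \ldots, \overline{\pi_n}\}$. This yields (ii)--(iv).

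The main obstacle I anticipate is the square-integrability step: the Arsac decomposition is an $A_\sigma$-norm phenomenon with no a priori $L^1$ or $L^2$ control on the individual $p_i$, so one cannot simply distribute $p \in L^2$ across the summands. The clever input is the double description of the GNS representation $\pi_p$ — as a subrepresentation of $\lambda$ via Proposition~\ref{p:stongly-minimal-integrable}(i), and as a subrepresentation of $\sigma \otimes I_{\ell^2}$ via the Arsac series — which feeds the two pictures into one another and pins each $\pi_i$ as an irreducible subrepresentation of $\lambda$. After this, the $*$-algebra structure of Lemma~\ref{l:unimodualr} and the orthogonality relations handle the rest.
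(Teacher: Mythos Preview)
Your GNS argument for square-integrability---exhibiting $\pi_p$ simultaneously as a subrepresentation of $\lambda$ and of $\sigma^\infty$, so that each $\pi_i$ sits inside $\lambda$---is a pleasant alternative to the paper's route via Arsac's intersection formula $A_\sigma \cap A(G) = A_{\sigma'}$. (The claim that each multiplicity $m_i$ is finite is neither justified nor needed; $m_i \geq 1$ is all you actually use, and that part is correct.)

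The genuine gap is in the second half. Having built $p_{ij} = k_{\pi_i}\pi_{i,\xi_{ij},\xi_{ij}}$ via the trace-class isometry, you invoke Remark~\ref{p:strongly-minimal} to declare each $p_{ij}$ a strongly minimal $L^1$-projection. But that remark presupposes that $\pi_i$ is \emph{integrable} and that $\xi_{ij}$ lies in the dense subspace $\mathcal{H}'_{\pi_i}$ of integrable vectors---neither of which you have established. You then write ``integrability of each $\pi_i$ is then automatic since each $p_{ij}$ is a nonzero $L^1$ coefficient function,'' which is circular: nothing in your argument places $p_{ij}$, or even $p_i$, in $L^1(G)$. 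The Arsac splitting and the subsequent rank-one decomposition are $B(G)$-norm constructions; square-integrability puts each piece in $L^2(G)$, but the very obstacle you flagged in your final paragraph---no $L^1$ control on the individual summands---is precisely what bites here and is not resolved.

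The paper sidesteps this by reversing the order of operations. It carries out the Wedderburn decomposition inside $\mathcal{A} = p \ast L^1(G) \ast p$, which is automatically contained in $L^1(G)\cap A_\sigma$ and whose image under the homomorphism $\Phi:A_\sigma\to\mathcal{T}$ lands in the finite-rank corner $P\mathcal{K}P$; hence $\mathcal{A}$ is finite-dimensional and semisimple. The resulting minimal idempotents lie in $L^1(G)$ by construction, are seen to be strongly minimal in $L^1(G)$ via the sandwich $p_{jk}\ast f\ast p_{jk}=p_{jk}\ast(p\ast f\ast p)\ast p_{jk}\in\mathcal{A}$, and Proposition~\ref{p:stongly-minimal-integrable}(ii)---not Remark~\ref{p:strongly-minimal}---then identifies each with a coefficient of an integrable irreducible. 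A separate combinatorial step matches the Wedderburn blocks with the $\pi_i$. Your approach, if the $L^1$-membership of the $p_i$ could be secured independently, would be tidier in that it aligns the blocks with the $\pi_i$ from the outset; as written, however, it does not close.
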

}

\begin{proof}
First note that $A_\sigma\cap A(G)\neq \{0\}$, since  $p\in L^1(G)\cap A(G)$  by  Proposition~\ref{p:every-idempotent}.  Thanks to \cite[(3.12)]{arsac}
there is a subrepresentation $\sigma'$ of $\sigma$ for which
$A_{\sigma'}=A_\sigma\cap A(G)$, but then $p\in A_{\sigma'}$, and our assumptions ensure that
$\sigma'=\sigma$.  In particular, for  each $i$,  $A_{\pi_i}\subseteq A_\sigma\subseteq A(G)$, and hence by 
\cite[14.3.1]{dixmier} each $\pi_i$ is square-integrable.  Thus by
Lemma~\ref{l:unimodualr}, $(A_\sig,k_\sigma^{-1}\|\cdot\|_B)$ is an involutive Banach
algebra when equipped with convolution. 

Recall that  $A_\sig^*\cong VN_\sig
=\ell^\infty\text{-}\bigoplus_{i=1}^n \mathcal{B}(\cH_{\pi_i})$.  We have  the usual duality
$\cT(\overline{\cH})^*\cong\mathcal{B}(\cH)$ given by $( \overline{\eta}\otimes\overline{\xi}^*, T)\mapsto
\mathrm{Tr}(\xi\otimes\eta^* T)=\langle T\xi,\eta\rangle$, 
where $\overline{\eta}\otimes\overline{\xi}^*$ is the rank-one operator on $\overline{\cH}$ 
given by $\overline{\zeta}\mapsto\langle\xi,\zeta\rangle\overline{\eta}$.
Combining these facts gives us an isometric Banach space isomorphism
\[
\phi:A_\sig\to\cT=\ell^1\text{-}\bigoplus_{i=1}^n \cT(\overline{\cH}_{\pi_i}),\quad
\sum_{i=1}^n\sum_{j=1}^\infty \pi_{i,\xi_{ij},\eta_{ij}}\mapsto
\sum_{i=1}^n \sum_{j=1}^\infty \overline{\eta}_{ij}\otimes\overline{\xi}_{ij}^*,
\]
where $ \pi_{i,\xi_{ij},\eta_{ij}}=\langle\pi_i(\cdot)\xi_{ij},\eta_{ij}\rangle$.
Consider the new mapping $\Phi$ as follows.
\[
\Phi: (A_\sig,k_\sigma^{-1}\|\cdot\|_{B(G)}) \to \cT, \quad
\sum_{i=1}^n\sum_{j=1}^\infty \pi_{i,\xi_{ij},\eta_{ij}}\mapsto
\sum_{i=1}^n \frac{1}{k_{\pi_i}}\sum_{j=1}^\infty \overline{\eta}_{ij}\otimes\overline{\xi}_{ij}^*,
\]
  where $k_\sigma$ is the constant from the preceding lemma.
It is straightforward to check that $\Phi$ is a continuous bijective algebra homomorphism when the domain is endowed
with the convolution product.  Indeed, one checks that $\Phi$ is a homomorphism
on the dense subspace $F_\sig$ of
finite sums of matrix coefficients of $\sig$, and observes that $\Phi(F_\sig)$ is
the space of finite rank operators, which is dense in $\cT$.  

Now we let $\mathcal{A}=p\ast L^1(G)\ast p$. Since $L^1(G)\cap A_\sig$ is a left ideal in
$L^1(G)$, $\mathcal{A}$ is an involutive convolution  algebra with unit $p$, and
 $\mathcal{A} \subseteq A_\sig$.  Hence $\Phi(\mathcal{A})$ is a unital involutive subalgebra
of $\cT$, whence of the algebra of compact operators $\mathcal{K}
=c_0\text{-}\bigoplus_{i=1}^n\mathcal{K}(\cH_{\pi_i})$.  So if $P=\Phi(p)$, $P$ is a compact idempotent and thus it  is finite dimensional.
Moreover, we
have that $\Phi(\mathcal{A})\subset P\mathcal{K}P$ which is a finite dimensional $*$-algebra,
hence semisimple and thus by Wedderburn's theorem
isomorphic to a finite direct sum of full matrix algebras $\bigoplus_{j=1}^N M_{r_j}(\mathbb{C})$.
Let $\{E^j_{kl}\}_{k,l=1}^{r_j}$ be the matrix units of  $M_{r_j}(\mathbb{C})$.
Note  that for each $j\in 1,\ldots, N$ and $k\in 1, \ldots, r_j$, $ E^{j}_{kk}$ is a strongly minimal projection in $\bigoplus_{i=1}^N M_{r_i}(\mathbb{C})$, as $E^{j}_{kk} (\bigoplus_{i=1}^N M_{r_i}(\mathbb{C})) E^{j}_{kk} = \Bbb{C} E^{j}_{kk}$. 
Hence,  $p_{jk}:=\Phi^{-1}(E^j_{kk})$ is a strongly minimal projection in ${\cal A}$ with $p_{jk}\leq p$, and subsequently for each $f\in L^1(G)$ we have
\[
p_{jk} * f * p_{jk} = p_{jk} * (p*f*p) * p_{jk} \in \mathcal{A}.
\]
We now appeal to  Proposition~\ref{p:stongly-minimal-integrable}, and observe that $p_{jk}$ must be a coefficient function of an integrable representation. On the other hand $p_{jk}\in A_\sigma$, and therefore $p_{jk}$ is a coefficient function of $\pi_{i}$ for some $i$. 
In particular, $\pi_i$ is integrable. 

In the remaining, we prove that $N=n$.  In what follows, we assume that $\pi_i$ belongs to $\{ \pi_1, \ldots, \pi_n\}$.  We show the following two facts:
\begin{itemize}
\item[(a)]{ If $k\neq k'$ and $p_{jk}\in A_{\pi_i}$ then $p_{jk'}\in A_{\pi_i}$.}
\item[(b)]{ If $j\neq j'$ then $p_{jk}$ and $p_{j'l}$ do not belong to the same $A_{\pi_i}$. }
\end{itemize}

To prove (a), fix $j$ and $k \neq k'$.  Towards a contradiction, assume that $p_{jk}$ and $p_{jk'}$ are coefficient functions of representations  $\pi_{i}$ and $\pi_{i'}$ respectively,  with $i\neq i'$.   Since $A_{\pi_i}\ast A_{\pi_{i'}}=\{0\}$, for every $f\in L^1(G)$, we have $p_{jk} * f*p_{jk'}=0$ as $ f*p_{jk'}$ still belongs to $A_{\pi_{i'}}$. But this is a contradiction, as for $f=\Phi^{-1}(E^j_{kk'})$ we get  $ p_{jk} * f *  p_{jk'} =\Phi^{-1} (E^j_{kk} E^j_{kk'} E^j_{k'k'})=f \neq 0$.

For (b), recall that for each $f\in L^1(G)$, $p_{jk} * f* p_{j'l} = p_{jk} * (p* f* p) * p_{j'l}=0$, since $\Phi$ is a homomorphism. Now towards a contradiction, suppose  that $p_{jk}=\pi_{i,\xi,\xi}$ and $p_{j'l}=\pi_{i,\eta,\eta}$ for some $\xi,\eta\in \cH_{\pi_i}$.    Since $\pi_i$ is irreducible, there is some $g\in L^1(G)$ such that $\langle \pi_i(\overline{g}) \eta, \xi\rangle \neq 0$. Therefore,
\[
p_{jk} * g * p_{j'l} = \pi_{i, \xi, \xi} * \pi_{i, \eta, \pi_i(\overline{g}) \eta} = \langle \xi, \pi_i(\overline{g}) \eta \rangle \pi_{i, \eta, \xi} \neq 0,
\]
which is a contradiction. 
So with $p_i:=\sum_{j=1}^{r_i} p_{ij}$, properties  (i) and (ii) hold. 

To prove (iv), note that  by Lemma 1.1 of \cite{val}, the support of a strongly minimal projection is a singleton. In fact, for a nonzero $L^1$-projection of the form $\pi_{\xi,\xi}$, we have  $S(\pi_{\xi,\xi})=\{\overline{\pi}\}$, since 
$\langle\overline{\pi}(\pi_{\xi,\xi})\overline{\xi},\overline{\xi}\rangle=\|\pi_{\xi,\xi}\|_2^2>0$. 
Thus for every $i$, $S(p_i)= \{\overline{\pi_i}\}$, since $p_i$ is a finite sum of strongly minimal projections, each of which is a coefficient function of $\pi_i$. This fact, together with the orthogonality relations for square-integrable representations, imply that $S(p)=\{\overline{\pi_1},\ldots,\overline{\pi_n}\}$. 
\end{proof}
%
%

\begin{corollary}
Let $\pi$ be an irreducible representation of a unimodular locally compact group $G$, and $p\in A_\pi$ be an $L^1$-projection. Then $p$ is a minimal projection if and only if it is strongly minimal.
\end{corollary}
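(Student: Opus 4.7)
The plan is to reduce this to the single-representation case of Theorem \ref{t:projections-coefficent}. The direction ``strongly minimal implies minimal'' is already noted in the paragraph following equation \eqref{eq:alpha-map}, so the real content is the reverse implication: if $p \in A_\pi$ is a minimal $L^1$-projection (with $\pi$ irreducible), then $p$ is strongly minimal.

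For the forward step I would apply Theorem \ref{t:projections-coefficent} with $n = 1$ and $\sigma = \pi$. Because $\pi$ is irreducible, the only subrepresentation of $\sigma$ is $\sigma$ itself (the zero subspace yields the trivial coefficient space), so the hypothesis that $p$ lie in no $A_{\sigma'}$ for a proper subrepresentation is automatic. Theorem \ref{t:projections-coefficent}(ii) then produces a decomposition $p = \sum_{j=1}^{r} p_j$, where each $p_j$ is a (nonzero) strongly minimal $L^1$-projection and $p_j \ast p_{j'} = 0$ for $j \neq j'$.

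The concluding step uses minimality to force $r = 1$. For each $j$, the orthogonality relations give
\[
p_j \ast p = p_j \ast \sum_{k=1}^{r} p_k = p_j \ast p_j = p_j,
\]
so $p_j \leq p$. Since $p$ is minimal and each $p_j$ is a nonzero projection, we conclude $p_j = p$ for every $j$. But if $r \geq 2$, taking $j \neq j'$ would yield $p_j \ast p_{j'} = p \ast p = p \neq 0$, contradicting the orthogonality. Hence $r = 1$ and $p = p_1$ is strongly minimal.

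I do not anticipate any real obstacle here beyond correctly invoking Theorem \ref{t:projections-coefficent}; the argument is essentially bookkeeping once the theorem is available. The only subtlety worth flagging is confirming that ``proper subrepresentation of an irreducible $\pi$'' is vacuous in the sense needed, which is immediate from the definition of irreducibility.
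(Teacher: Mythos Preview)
Your argument is correct and follows essentially the same route as the paper: invoke Theorem~\ref{t:projections-coefficent} with $\sigma=\pi$ to write $p$ as an orthogonal sum of strongly minimal projections, then use minimality to force the sum to have a single term. The paper's proof is terser (two sentences), but the content is identical; your added detail about why the proper-subrepresentation hypothesis is vacuous and why $r\geq 2$ leads to a contradiction simply spells out what the paper leaves implicit.
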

\begin{proof}
By Theorem~\ref{t:projections-coefficent}, a projection $p$ in $A_\pi$ can be written as a finite sum of strongly minimal projections of the form $\pi_{\xi,\xi}$. Therefore, $p$ is minimal if and only if it is strongly minimal. 
\end{proof}

\section{Support of projections}\label{s:support-of-projections}
Recall that the support of an  $L^1$-projection $p$, denoted by $S(p)$, is the collection of all (equivalence classes of) irreducible representations $\pi$ of $G$ such that $\pi(p)\neq 0$. 
In this section, we show that the support sheds some light on the structure of the projection itself.
 This is evident in the abelian case, where the Fourier transform of a projection is just the characteristic function of the conjugate of its support. Recall that the support of a projection is always open and compact in the Fell topology of the dual.  For compact groups, the support of a projection is the finite set of irreducible representations which are used to construct the projection. 
We study similar cases (projections with finite support) for general unimodular groups in more detail.  
 
We start this section by a general observation linking the support of a projection and the support of its GNS representation. 

\begin{proposition}\label{p:second-countable}
 Let $G$ be unimodular, second countable and type I, and $p$ be a projection
in $L^1(G)$.  Then $S(p)\cap\widehat{G}_r$ is dense in $\supp\overline{\pi}_p$.
\end{proposition}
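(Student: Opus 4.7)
The plan is to establish the stronger equality $\supp\overline{\pi}_p=\overline{S(p)\cap\widehat{G}_r}$, from which the density claim is immediate.

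First, I would verify $\supp\overline{\pi}_p\subseteq\widehat{G}_r$. By Proposition~\ref{p:every-idempotent}, $p\in A(G)\cap L^2(G)$; by Proposition~\ref{p:stongly-minimal-integrable}(i), the GNS representation is realised as $\pi_p=\lambda|_{L^2(G)\ast\overline{p}}$, a cyclic subrepresentation of $\lambda$ with cyclic vector $\overline{p}$. Unimodularity makes $f\mapsto\overline{f}$ an intertwiner of $\lambda$ with $\overline{\lambda}$, so $\overline{\pi}_p$ is (equivalent to) a subrepresentation of $\lambda$ as well, and $\supp\overline{\pi}_p\subseteq\supp\lambda=\widehat{G}_r$.

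The main step is the Plancherel decomposition. Since $G$ is unimodular, second countable and type~I, we have
\[
L^2(G)\cong\int^{\oplus}_{\widehat{G}_r}\mathcal{H}_\pi\otimes\overline{\mathcal{H}}_\pi\,d\mu_\lambda(\pi)
\]
under $f\mapsto(\pi\mapsto\pi(f))$, with $\lambda$ acting fibrewise as $\pi(x)\otimes I$. Now $\overline{p}$ is itself an $L^1$-projection (direct check, using $p=p^*$), so $T_\pi:=\pi(\overline{p})$ is a finite-rank self-adjoint projection on $\mathcal{H}_\pi$ of some rank $r_\pi$ for almost every~$\pi$. A fibrewise computation identifies the $\lambda$-cyclic subspace generated by $\overline{p}$ with $\int^\oplus\mathcal{H}_\pi\otimes\overline{\operatorname{Ran}(T_\pi)}\,d\mu_\lambda(\pi)$, on which $\lambda$ acts as $\pi(x)$ with multiplicity $r_\pi$. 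Hence
\[
\pi_p\cong\int^{\oplus}_{\widehat{G}_r}\pi^{r_\pi}\,d\mu_\lambda(\pi),
\]
and $\supp\pi_p$ is the closure in $\widehat{G}$ of $\{\pi\in\widehat{G}_r:\pi(\overline{p})\neq 0\}$.

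Finally, I would pass to conjugates. The identity $\overline{\pi}(p)\overline{\xi}=\overline{\pi(\overline{p})\xi}$ shows that $\pi(\overline{p})\neq 0$ iff $\overline{\pi}(p)\neq 0$. Since conjugation $\pi\mapsto\overline{\pi}$ is a self-homeomorphism of $\widehat{G}$ and, by unimodularity, preserves $\widehat{G}_r$, applying it to the description of $\supp\pi_p$ above yields $\supp\overline{\pi}_p=\overline{S(p)\cap\widehat{G}_r}$, so $S(p)\cap\widehat{G}_r$ is trivially dense in $\supp\overline{\pi}_p$. The main obstacle I anticipate is the Plancherel bookkeeping in paragraph three, namely identifying the $\lambda$-cyclic subspace generated by $\overline p$ inside the direct integral and reading off the multiplicity as $\operatorname{rank}(\pi(\overline{p}))$; the conjugations that distinguish $\pi_p$ from $\overline{\pi}_p$, and $S(\overline{p})=\{\overline{\pi}:\pi\in S(p)\}$ from $S(p)$, are a secondary source of potential confusion.
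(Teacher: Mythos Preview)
Your approach is correct and essentially the same as the paper's: both use the Plancherel decomposition to identify $L^2(G)\ast\overline{p}$ fibrewise and then read off $\supp\pi_p$ as the closure of $\{\pi:\pi(\overline{p})\neq 0\}$. The only cosmetic differences are that the paper invokes the right regular representation explicitly (writing $L^2(G)\ast\overline{p}=\rho(\overline{p})L^2(G)$ rather than computing the $\lambda$-cyclic subspace directly), and handles the final step by interchanging $p\leftrightarrow\overline{p}$ (using $\pi_{\overline{p}}\cong\overline{\pi}_p$) instead of applying the conjugation homeomorphism $\pi\mapsto\overline{\pi}$ on $\widehat{G}$ as you do.
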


\begin{proof}
We have the following  Plancherel 
picture of the left and right regular representations (see \cite[Section~7.5]{folland}). There is a Borel subset
$B$ of $\widehat{G}$ which is dense in $\widehat{G}_r$ and a measure $\mu$ on $\widehat{G}$
which is carried by $B$ for which we have unitary equivalences
\[
\lam\cong\int^\oplus_{B}I\otimes \overline{\pi}\,d\mu(\pi)\text{ and }
\rho\cong\int^\oplus_{B}  {\pi} \otimes I \,d\mu(\pi)
\]
on
\[
L^2(G)\cong\int^\oplus_{B} {\fH}_\pi\otimes\overline{\fH}_\pi\,d\mu(\pi).
\]
 The reader may refer to \cite{arsac, folland, dixmier} for the theory of direct integrals of representations.  Note that the aforementioned presentation is slightly different  from (but equivalent to) the one in \cite[18.8.1]{dixmier}. 
Proposition~\ref{p:stongly-minimal-integrable} shows that the representation $\pi_p=\lam(\cdot)|_{L^2(G)\ast\overline{p}}$ on the Hilbert space $L^2(G)\ast\overline{p}$ with the cyclic vector $\overline{p}$
is the Gelfand-Naimark
cyclic representation of the positive-type element $p$.  Observe, then, that
\[
\rho(\overline{p})=\int^\oplus_{B}\pi(\overline{p})\otimes I \,d\mu(\pi),
\]
so
\begin{align}\label{eq:pipplancherel}
L^2(G)\ast\overline{p}&=\rho(\overline{p})L^2(G)
=\int^\oplus_{B}\pi(\overline{p})\fH_\pi\otimes\overline{\fH}_\pi\,d\mu(\pi) \notag \\
&=\int^\oplus_{\{\pi\in B:\pi(\overline{p})\not=0\}}\pi(\overline{p})\fH_\pi\otimes\overline{\fH}_\pi\,d\mu(\pi).
\end{align}
By  \cite[8.6.8 and 8.6.9]{dixmier}, it  follows that 
\[
\supp\pi_p={\rm cl}\{\pi\in B:\pi(\overline{p})\not=0\}
\]
where we have used ${\rm cl} S$ to denote the closure of $S$, so as not to conflict with notation of conjugation.
It is clear that $S(\overline{p}) \cap \widehat{G}_r$ contains $\{ \pi \in B: \ \pi(\overline{p})\neq 0\}$, while also that $(S(\overline{p}) \cap \widehat{G}_r ) \cap ( \widehat{G}_r \setminus \operatorname{cl}\{ \pi \in B: \ \pi (\overline{p}) \neq 0\}) = \emptyset$. Interchanging  $p$ and $\overline{p}$, we obtain
the desired result.
\end{proof}

The following example shows that for a  totally disconnected algebraic group,   the support of an $L^1$-projection  does not necessarily lie in the reduced dual. However,  we know of no connected, unimodular, second countable and type I group $G$ for which the support of an $L^1$-projection does not lie in $\widehat{G}_r$.

\begin{example}\label{eg:algebraic-group}
Let $G=\operatorname{SL}_n(\mathbb{Q}_p)$ for $(n\geq 2)$. Then $G$ is type I, as it is a reductive $p$-adic group (see~\cite{bern}). Note that $G$ has an open compact subgroup $K=\operatorname{SL}_n(\mathbb{O}_p)$. Consider the projection $1_K$ in $L^1(G)$, and note that for the trivial character ${\bf 1}$ on ${G}$, ${\bf 1}(1_K)\neq 0$. But ${\bf 1}\notin \widehat{G}_r$, as $G$ is not amenable.
\end{example}
Proposition~\ref{p:second-countable}   tells us that 
\[
\pi_p=\int^\oplus_{B \cap S(\overline{p})} I\otimes  \overline{\pi} d\mu(\pi).
\] In a particular case, when $S(p)\cap \widehat{G}_r$ is finite, we can describe the projection as in the following theorem.

\begin{theorem}\label{t:single-supported-projection}
 Let $G$ be unimodular, second countable and type I, and $p$ be a projection
in $L^1(G)$. If $S(p)\cap\widehat{G}_r=\{\pi_1,\dots,\pi_n\}$, then $p\in A_\sig$ where 
$\sig=\bigoplus_{i=1}^n\overline{\pi_i}$, and $S(p)=\{\pi_1,\dots,\pi_n\}$.
\end{theorem}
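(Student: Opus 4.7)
The plan is to combine Proposition~\ref{p:stongly-minimal-integrable}(i), the explicit Plancherel picture from the proof of Proposition~\ref{p:second-countable}, and Theorem~\ref{t:projections-coefficent}. By Proposition~\ref{p:stongly-minimal-integrable}(i), $p\in A_{\pi_p}$ where $\pi_p=\lambda(\cdot)|_{L^2(G)*\overline{p}}$, so it suffices to prove $A_{\pi_p}\subseteq A_\sigma$. From equation (\ref{eq:pipplancherel}), $\pi_p$ is unitarily equivalent to
\[
\int^\oplus_{X}\pi(\overline{p})\cH_\pi\otimes\overline{\cH}_\pi\,d\mu(\pi),\qquad X:=\{\pi\in B:\pi(\overline{p})\neq 0\},
\]
with $\lambda(x)$ acting on each fiber as $I\otimes\overline{\pi}(x)$.

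To translate the hypothesis on $S(p)$ into one on $X$, I would note that a short computation using $p^*=p$ and unimodularity gives $\pi(\overline{p})\neq 0 \Longleftrightarrow \overline{\pi}(p)\neq 0$, so $S(\overline{p})=\{\overline{\pi}:\pi\in S(p)\}$; since $\overline{\lambda}\cong\lambda$ (via $f\mapsto\overline{f}$ on $L^2(G)$), the reduced dual is closed under conjugation, so $S(\overline{p})\cap\widehat{G}_r=\{\overline{\pi_1},\ldots,\overline{\pi_n}\}$. In particular, $X\subseteq\{\overline{\pi_1},\ldots,\overline{\pi_n}\}$ is a finite Borel set, so the restriction of the Plancherel measure to $X$ is a sum of point masses and the direct integral collapses to a Hilbert-space direct sum $\pi_p\cong\bigoplus_{i\in J}m_i\,\overline{\pi_i}$ for some $J\subseteq\{1,\ldots,n\}$ and multiplicities $m_i\geq 1$. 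Consequently every coefficient function of $\pi_p$ is, in $B(G)$-norm, an absolutely convergent sum of coefficient functions of the $\overline{\pi_i}$'s, and Proposition~\ref{p:Arsac-direct-sum} yields $A_{\pi_p}\subseteq A_\sigma$; in particular $p\in A_\sigma$.

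For the identification of $S(p)$, let $\sigma'=\bigoplus_{i\in J'}\overline{\pi_i}$ be the minimal sub-representation of $\sigma$ with $p\in A_{\sigma'}$. Theorem~\ref{t:projections-coefficent}(iv) applied to $p\in L^1(G)\cap A_{\sigma'}$ gives $S(p)=\{\pi_i:i\in J'\}\subseteq\{\pi_1,\ldots,\pi_n\}$, and the opposite inclusion is immediate from $S(p)\supseteq S(p)\cap\widehat{G}_r=\{\pi_1,\ldots,\pi_n\}$, forcing $J'=\{1,\ldots,n\}$; as a by-product, each $\pi_i$ is integrable.

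The main obstacle is the direct-integral-to-direct-sum step: one must verify carefully that the Plancherel measure on the finite set $X$ is purely atomic, identify the $I\otimes\overline{\pi}$-fibers as honest multiples of $\overline{\pi_i}$, and check that these identifications respect the $B(G)$-norm enough for Proposition~\ref{p:Arsac-direct-sum} to apply. The type I and unimodularity hypotheses are doing the silent work here, forcing each $\overline{\pi_i}$ to be a discrete series representation so that the fibers above do collapse to norm-controlled multiples of an irreducible.
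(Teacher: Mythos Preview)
Your proposal is correct and follows essentially the same route as the paper: invoke Proposition~\ref{p:stongly-minimal-integrable}(i) to place $p$ in $A_{\pi_p}$, use the Plancherel decomposition~(\ref{eq:pipplancherel}) to see that $\pi_p$ is a direct integral over a finite set and hence a direct sum of multiples of the $\overline{\pi_i}$, conclude $p\in A_\sigma$, and then identify $S(p)$ via minimality of $\sigma$ together with Theorem~\ref{t:projections-coefficent}(iv). The paper is terser---it simply asserts that the measure in~(\ref{eq:pipplancherel}) is supported on $\{\overline{\pi_1},\dots,\overline{\pi_n}\}$ and that $p\notin A_{\sigma'}$ for any proper $\sigma'$---but the underlying argument is the same.

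One remark: your ``main obstacle'' paragraph is more cautious than necessary. A measure restricted to a finite Borel set is automatically a finite sum of point masses, and on each atom the fiber $\pi(\overline{p})\cH_\pi\otimes\overline{\cH}_\pi$ with action $I\otimes\overline{\pi}$ is manifestly a (possibly infinite) multiple of $\overline{\pi_i}$; no further structure of the Plancherel measure or discrete-series theory is needed for the inclusion $A_{\pi_p}\subseteq A_\sigma$. The paper accordingly does not pause over this step.
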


\begin{proof}
Note that by (\ref{eq:pipplancherel}), the  measure representing $\pi_p$   is supported on $\{\overline{\pi}_1,\ldots, \overline{\pi}_n\}$; hence, the Plancherel measure $\mu$ admits each $\overline{\pi}_i$ as an atom.  
Then, letting $\sig=\bigoplus_{i=1}^n\overline{\pi}_i$, Proposition~\ref{p:stongly-minimal-integrable} shows that $p\in A_\sig$. 
It is easy to  see that for any proper subrepresentation $\sig'$ of $\sig$, $p\not\in A_{\sig'}$. This follows from the  orthogonality relations for square-integrable representations $\pi_i$ and the fact that  the support of $p$ contains $\{\overline{\pi}_1,\ldots, \overline{\pi}_n\}$. 
\end{proof}

\begin{corollary}\label{high_point}
Let $G$ be a unimodular, second countable, type I
locally compact group with the property that 
every compact open subset of $\widehat{G}$ is
a finite subset of $\widehat{G}_r$. Let $p$ be a projection in $L^1(G)$. Then, there exist mutually
inequivalent $\pi_1,\cdots,\pi_n\in\widehat{G}$ 
such that
\begin{itemize} 
\item[(i)]{ $p=\sum_{i=1}^n p_i$ where each $p_i$ is a projection in $L^1(G)\cap A_{\pi_i}$
and $p_i\ast p_{i'}=0$ for $i\not=i'$;}
\item[(ii)]{each $p_i=\sum_{j=1}^{r_i}p_{ij}$ where each $p_{ij}$ is strongly minimal and
$p_{ij}\ast p_{ij'}=0$ if $j\not=j'$;}
\item[(iii)]{each $\pi_i$ is integrable; and}
\item[(iv)] $S(p)=\{\overline{\pi_1},\ldots,\overline{\pi_n}\}$.
\end{itemize}
 
\end{corollary}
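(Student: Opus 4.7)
The plan is to reduce the corollary to a direct application of Theorems~\ref{t:single-supported-projection} and~\ref{t:projections-coefficent}; the only genuine content is verifying that the hypothesis on $G$ forces the support of $p$ to be finite and to lie in $\widehat{G}_r$.

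First I would recall that $S(p)$ is always a compact open subset of $\widehat{G}$, as noted in Section~\ref{section:notation} (via \cite[3.3.2 and 3.3.7]{dixmier}). The standing assumption on $G$ then forces $S(p)$ to be a finite subset of $\widehat{G}_r$. I would write $S(p)=\{\overline{\pi_1},\ldots,\overline{\pi_n}\}$ for pairwise inequivalent $\pi_1,\ldots,\pi_n\in\widehat{G}$; the choice of conjugates in this labeling is purely cosmetic, and is what will make conclusion (iv) read as stated.

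Next, since $S(p)\cap\widehat{G}_r=\{\overline{\pi_1},\ldots,\overline{\pi_n}\}$, Theorem~\ref{t:single-supported-projection} applies and delivers $p\in A_\sigma$ with $\sigma=\bigoplus_{i=1}^n\pi_i$ (using $\overline{\overline{\pi_i}}=\pi_i$), together with the assertion (established in the final sentence of that proof, via orthogonality of matrix coefficients of square-integrable representations) that $p\notin A_{\sigma'}$ for any proper subrepresentation $\sigma'$ of $\sigma$.

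Thus $p\in L^1(G)\cap A_\sigma$ satisfies every hypothesis of Theorem~\ref{t:projections-coefficent}, and the four conclusions (i)--(iv) of that theorem are exactly the four statements to be proved. There is really no substantive obstacle here: all the heavy lifting has already been carried out in the two preceding theorems, and this corollary is a pure bookkeeping combination of them. The one mild subtlety is the conjugation relabeling between the representations $\overline{\pi_i}$ appearing in $S(p)$ (the Plancherel side) and the representations $\pi_i$ with respect to whose matrix coefficients $p$ is actually written; keeping track of this is what ensures (iv) appears in the stated form.
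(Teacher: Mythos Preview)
Your proposal is correct and follows exactly the paper's own approach: observe that $S(p)$, being compact open, is by hypothesis a finite subset of $\widehat{G}_r$, then feed this into Theorem~\ref{t:single-supported-projection} followed by Theorem~\ref{t:projections-coefficent}. Your extra care with the conjugation bookkeeping is accurate and simply makes explicit what the paper leaves implicit in the phrase ``combine Theorem~\ref{t:projections-coefficent} with Theorem~\ref{t:single-supported-projection}.''
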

\begin{proof} Under these hypotheses, the compact open set $S(p)$ must be a finite subset of $\widehat{G}_r$. Now,
combine Theorem~\ref{t:projections-coefficent} with
Theorem \ref{t:single-supported-projection}.
\end{proof}

We remark that every compact open subset of $\widehat{G}$ is a finite subset of $\widehat{G}_r$
for $\mathrm{SL}_2(\mathbb{R})$, and for any almost connected nilpotent group (\cite[Theorem 4]{KaT3}).
No noncompact property (T) group enjoys this property; indeed consider the trivial representation
$\{{\bf 1}\}$.

\section{Application to homomorphisms of group algebras }
Let $p$ be a projection in $L^1(G)$. Following \cite{ross1}, define the set
\[
\mathbb{M}_p:=\{ \mu\in M(G): \; \mu^**\mu=\mu*\mu^*=p\; \text{and}\; p*\mu=\mu\}.
\]
We shall call this the \emph{intrinsic unitary group at $p$}. Note that since $L^1(G)$ is an ideal in $M(G)$, we see, in fact, that ${\mathbb M}_p\subseteq L^1(G)$.
One can equip $\mathbb{M}_p$ with the topology $\sigma(L^1(G),C_0(G))$ restricted to $\mathbb{M}_p$.  With convolution product, identity $p$, and inverses $f^{-1} := f^*$, $\mathbb{M}_p$ is a semi-topological group with continuous inversion. 

Let us make the assumption that {\it $G$ is a unimodular, second countable, type I group, for which
every compact open subset is a finite subset of $\widehat{G}_r$}.  Then by 
Corollary~\ref{high_point},
every $L^1$-projection $p$ admits the form
\begin{equation}\label{eq:form-of-projection}
p=\sum_{i=1}^n p_i\ \ \text{and }\  \ p_i=\sum_{j=1}^{r_i} k_{\pi_i} \pi_{i, \xi_j^{(i)},\xi_j^{(i)}}
\end{equation}
where $k_{\pi_i}>0$ is the formal dimension of $\pi_i$, and $\xi_1^{(i)},\ldots, \xi_{r_i}^{(i)}$ are unit vectors in $\cH_{\pi_i}$.
For notational convenience, we define $u\cdot p_i$, when $u$ is a unitary matrix of size $r_i$, to be $u\cdot p_i=\sum_{k,\ell=1}^{r_i} u_{k,\ell} k_{\pi_i} \pi_{i, \xi_k^{(i)},\xi_\ell^{(i)}}$

\begin{proposition}\label{p:unitary-group-of-SL2-type-groups}
With the assumptions given above, each intrinsic unitary group in $L^1(G)$ is of the form of 
\[
\mathbb{M}_p=\left\{ \sum_{i=1}^n u_i \cdot p_i:\; u_i\in \gU(r_i)\right\} \cong \prod_{i=1}^n \gU(r_i)
\]
when $p$ is a projection in $L^1(G)$ with $p=\sum_{i=1}^n p_i$  as in (\ref{eq:form-of-projection}), where $r_i\in\mathbb{N}$ and 
$U(r_i)$ is the group $r_i\times r_i$ unitary matrices,
for $1\leq i\leq n$.
. 
\end{proposition}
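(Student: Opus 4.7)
The plan is to exploit the $*$-algebra isomorphism $\Phi:\mathcal{A}\to\bigoplus_{i=1}^n M_{r_i}(\mathbb{C})$ supplied by the proof of Theorem~\ref{t:projections-coefficent}, where $\mathcal{A}:=p*L^1(G)*p$ and $\Phi$ is restricted to $\mathcal{A}$. I will first recognize $\mathbb{M}_p$ as the full unitary group of $\mathcal{A}$ (with identity $p$), and then use the explicit coefficient-function description in (\ref{eq:form-of-projection}) together with the orthogonality relation (\ref{eq:convolution-of0coefficient}) to unwind that unitary group as the set $\{\sum_i u_i\cdot p_i : u_i\in \gU(r_i)\}$.

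For the first step, if $\mu\in\mathbb{M}_p$ then $\mu*p = \mu*\mu^**\mu = p*\mu = \mu$, so $\mu\in\mathcal{A}$, and the conditions $\mu^**\mu = \mu*\mu^* = p$ say precisely that $\mu$ is unitary in the $*$-algebra $\mathcal{A}$ with unit $p$. Conversely, any such unitary automatically satisfies $p*\mu = \mu*p = \mu$ and hence lies in $\mathbb{M}_p$. Since $\Phi$ is a $*$-algebra isomorphism onto $\bigoplus_{i=1}^n P_i\mathcal{K}(\mathcal{H}_{\pi_i})P_i\cong\bigoplus_{i=1}^n M_{r_i}(\mathbb{C})$, where $P_i:=\Phi(p_i)$ has rank $r_i$ (the orthogonal projection onto $\overline{\mathrm{span}\{\overline{\xi_j^{(i)}}:1\le j\le r_i\}}$), the image $\Phi(\mathbb{M}_p)$ is the unitary group $\prod_{i=1}^n\gU(r_i)$, giving the claimed abstract group isomorphism.

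For the concrete parametrization, I will establish, for $u,v\in M_{r_i}(\mathbb{C})$, the identities
\[
(u\cdot p_i)^* = u^*\cdot p_i,\qquad (u\cdot p_i)*(v\cdot p_i) = (vu)\cdot p_i,
\]
together with $(u\cdot p_i)*(v\cdot p_j)=0$ for $i\neq j$. The first identity uses $\pi^*_{i,\xi,\eta}=\pi_{i,\eta,\xi}$ (valid because $G$ is unimodular); the second follows from (\ref{eq:convolution-of0coefficient}) combined with the crucial fact that $\{\xi_j^{(i)}\}_{j=1}^{r_i}$ is orthonormal in $\mathcal{H}_{\pi_i}$, which is forced by the conditions $p_{ij}*p_{ij}=p_{ij}$ and $p_{ij}*p_{ij'}=0$ for $j\neq j'$ supplied by Corollary~\ref{high_point}; the third follows from $A_{\pi_i}*A_{\pi_j}=0$ for inequivalent irreducible representations, which transmits to $(u\cdot p_i)*f*(v\cdot p_j)$-type vanishing through the decomposition of Proposition~\ref{p:Arsac-direct-sum}. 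Consequently, $u\mapsto u\cdot p_i$ is an anti-$*$-isomorphism from $M_{r_i}(\mathbb{C})$ onto $p_i*L^1(G)*p_i$, and summing over $i$ gives the bijection $(u_1,\dots,u_n)\mapsto\sum_i u_i\cdot p_i$ from $\bigoplus_i M_{r_i}(\mathbb{C})$ onto $\mathcal{A}$ that takes unitaries to unitaries, which yields the claimed description of $\mathbb{M}_p$.

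The main obstacle is the forced orthonormality of the vectors $\xi_j^{(i)}$ in (\ref{eq:form-of-projection}): one must combine the orthogonality of the strongly minimal summands from Corollary~\ref{high_point} with (\ref{eq:convolution-of0coefficient}) to deduce $\langle\xi_j^{(i)},\xi_{j'}^{(i)}\rangle=\delta_{j,j'}$, without which neither of the key identities above holds. A minor bookkeeping point is that the map $u\mapsto u\cdot p_i$ reverses multiplication, so to obtain a literal group homomorphism $\prod_i\gU(r_i)\to\mathbb{M}_p$ (rather than an anti-homomorphism) one must precompose with $u\mapsto u^*$ on each factor; this leaves the set-theoretic description unchanged and the abstract group isomorphism intact. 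Finite-dimensionality of $\mathcal{A}$ also makes any topological refinement (for the weak-$*$ topology on $\mathbb{M}_p$) automatic.
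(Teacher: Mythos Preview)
Your proof is correct and follows the same route as the paper: identify $\mathbb{M}_p$ with the unitary group of the unital $*$-algebra $\mathcal{A}=p\ast L^1(G)\ast p$, and then invoke the isomorphism of $\mathcal{A}$ with $\bigoplus_{i=1}^n M_{r_i}(\mathbb{C})$ coming from Theorem~\ref{t:projections-coefficent}. The paper's argument is a single sentence to this effect; you supply the details it omits (the verification that $\mathbb{M}_p$ coincides with the unitary group of $\mathcal{A}$, and the orthonormality of the $\xi_j^{(i)}$ forced by $p_{ij}\ast p_{ij'}=\delta_{jj'}p_{ij}$).

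One point worth highlighting: your computation $(u\cdot p_i)\ast(v\cdot p_i)=(vu)\cdot p_i$ is correct, so the explicit map $(a_i)\mapsto\sum_i a_i\cdot p_i$ that the paper writes down is in fact an anti-$*$-isomorphism, not a $*$-isomorphism as the paper states. Your remedy (precompose with $u\mapsto u^*$) is the right one, and since $\gU(r_i)$ is closed under adjoints this leaves both the set $\mathbb{M}_p$ and the abstract group isomorphism unchanged. So you have caught and repaired a small inaccuracy in the paper's formulation without altering the approach or the conclusion.
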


\begin{proof}
We saw in the proof of Theorem~\ref{t:projections-coefficent}, that a 
direct sum of matrix algebras $\bigoplus_{i=1}^nM_{r_i}(\mathbb{C})$
is $*$-isomorphic to $p\ast L^1(G)\ast p$, with the isomorphism given by
$(a_i)_{i=1}^n\mapsto \sum_{i=1}^n\sum_{k,\ell=1}^{r_i} a_{k,\ell} k_{\pi_i} \pi_{i, \xi_k^{(i)},\xi_\ell^{(i)}}$.
The structure of the unitary group follows immediately.
\end{proof} 

The value of the above result lies in its application to the problem of constructing homomorphisms from $L^1(F)$  to $L^1(G)$, where $F$ is another locally compact group, as given in \cite{ross1}. 
Let $p$ be given as in (\ref{eq:form-of-projection}), and $\mathbb{M}_p$ as in 
Proposition~\ref{p:unitary-group-of-SL2-type-groups}.  Given any continuous homomorphism 
$\phi:F\to \prod_{i=1}^n\mathrm{U}(d_i)$, we can render
a $*$-homomorphism $\Phi_p:L^1(F)\to L^1(G)$ by
\begin{equation}\label{eq:strucPhip}
\Phi_p(f)=\sum_{i=1}^n\int_F f(s)\phi(s)_i\cdot p_i\,ds.
\end{equation}
We can use this to construct non-trivial homomorphisms form $L^1(F)$ to $L^1(G)$ where
there exists no non-trivial homomorphisms form $F$ to $G$.  For example we may let
$F={\rm{SU}}(n)$, and let $G={\rm SL}_2({{\mathbb R}})$, the reduced Heisenberg group $\mathbb{H}_r$,
or any finite group admitting an irreducible representation of dimension at least $n$.
Notice that if $F$ and $G$ are abelian and $G$ is compact then each $d_i=1$, and the $*$-homomorphism $\Phi_p$ corresponds to the piecewise
affine map $\widehat{G}\to\widehat{F}$ whose domain is $\{\overline{\pi}_1,\dots,\overline{\pi}_n\}$ and is given by $\overline{\pi}_i\mapsto\phi_i$.  In the case that $G$ is nonabelian and some $d_i>1$, then 
$\|p\|_1>1$ and $\Phi_p$ is necessarily non-contractive;  compare with \cite{greenleaf}.

Let us close with a modest characterization of homomorphisms described above.

\begin{proposition}\label{p:homo-regkernel}
Let $G$ satisfy the conditions of 
Corollary~\ref{high_point} and $F$ be any locally compact group.  A $*$-homomorphism
$\Phi:L^1(F)\to L^1(G)$ is of the form $\Phi=\Phi_p$, as in (\ref{eq:strucPhip}), if and only if
$\ker\Phi$ is a modular ideal of $L^1(F)$.
\end{proposition}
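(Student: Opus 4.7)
The plan is to exploit the finite-dimensional description of $p\ast L^1(G)\ast p$ coming from Proposition~\ref{p:unitary-group-of-SL2-type-groups} (applied to the projection produced by Corollary~\ref{high_point}), and to recover $\phi$ by integrating the $*$-representation that $\Phi$ induces on that algebra.

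For the forward direction, suppose $\Phi=\Phi_p$. Using (\ref{eq:convolution-of0coefficient}) and the orthonormality of the vectors $\xi_j^{(i)}$, a direct computation shows that $p\ast(u\cdot p_i)=(u\cdot p_i)\ast p=u\cdot p_i$ for every $r_i\times r_i$ matrix $u$, so $p$ is a two-sided unit on $\Phi_p(L^1(F))$ and $\Phi_p(L^1(F))\subseteq p\ast L^1(G)\ast p\cong\bigoplus_{i=1}^n M_{r_i}(\mathbb{C})$. As a $*$-subalgebra of a finite-dimensional $C^*$-algebra, $\Phi_p(L^1(F))$ is itself unital; its unit has the form $e=\Phi_p(v)$ for some $v\in L^1(F)$, and $v$ is a modular unit witnessing that $\ker\Phi$ is modular.

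Conversely, if $\ker\Phi$ is modular with modular unit $v$, set $p:=\Phi(v)$. Then $p$ is a self-adjoint idempotent in $L^1(G)$, hence a projection, and acts as the identity of $\Phi(L^1(F))$, so $\Phi(L^1(F))\subseteq p\ast L^1(G)\ast p$. Decomposing $p$ via Corollary~\ref{high_point} as in (\ref{eq:form-of-projection}) and invoking the $*$-isomorphism $\Psi:p\ast L^1(G)\ast p\to\bigoplus_{i=1}^n M_{r_i}(\mathbb{C})$ from Proposition~\ref{p:unitary-group-of-SL2-type-groups}, which sends $u\cdot p_i$ to the matrix $u$ in the $i$-th block, the composition $\tilde\Phi:=\Psi\circ\Phi$ is a non-degenerate $*$-representation of $L^1(F)$ on $\mathbb{C}^N$ (with $N=r_1+\cdots+r_n$) whose range sits in $\bigoplus_{i=1}^n M_{r_i}(\mathbb{C})$. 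Standard integration theory produces a unique continuous unitary representation $\phi:F\to\gU(N)$ with $\tilde\Phi(f)=\int_F f(s)\phi(s)\,ds$.

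The crux of the proof is then verifying that $\phi$ takes values in the block-diagonal subgroup $\prod_{i=1}^n\gU(r_i)$, so that it makes sense as input to the construction (\ref{eq:strucPhip}). This follows because each $\phi(s)$ lies in the von Neumann algebra generated by $\tilde\Phi(L^1(F))$, which is contained in $\bigoplus_{i=1}^n M_{r_i}(\mathbb{C})$, and the only unitary elements of this block-diagonal algebra lie in $\prod_{i=1}^n\gU(r_i)$. Writing $\phi=(\phi_1,\dots,\phi_n)$ and applying $\Psi^{-1}$ then gives
\[
\Phi(f)=\Psi^{-1}\!\left(\int_F f(s)\phi(s)\,ds\right)=\sum_{i=1}^n\int_F f(s)\phi(s)_i\cdot p_i\,ds=\Phi_p(f),
\]
completing the identification.
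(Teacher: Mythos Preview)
Your proof is correct and follows essentially the same strategy as the paper: both directions rest on the finite-dimensional identification $p\ast L^1(G)\ast p\cong\bigoplus_{i=1}^n M_{r_i}(\mathbb{C})$ established in the proof of Theorem~\ref{t:projections-coefficent}.

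The one noteworthy difference is in the converse. The paper invokes \cite[Theorem~3.8]{ross1} as a black box to extend $\Phi$ to a bounded homomorphism $\Phi_M:M(F)\to p\ast L^1(G)\ast p$ and then reads off $\phi(s)=\Phi_M(\delta_s)\in\mathbb{M}_p$. You instead push $\Phi$ through the matrix isomorphism $\Psi$ and appeal to the standard bijection between non-degenerate $*$-representations of $L^1(F)$ and continuous unitary representations of $F$; the block-diagonal constraint on $\phi$ then drops out of a von Neumann algebra argument. This is a genuine, if modest, simplification: it makes the argument self-contained and bypasses the need to cite \cite{ross1}. One small point you leave implicit, which the paper spells out, is why $p=\Phi(v)$ is self-adjoint: since $\Phi$ is a $*$-homomorphism, $p^*$ is also a unit for the $*$-algebra $\Phi(L^1(F))$, hence $p^*=p$.
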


\begin{proof}
If $\Phi=\Phi_p$, as in (\ref{eq:strucPhip}), then $\Phi(L^1(F))\subseteq p\ast L^1(G)\ast p$.
As in the proof of Theorem~\ref{t:projections-coefficent}, we see that $\Phi(L^1(F))$ is
isomorphic to a $*$-subalgebra of a direct sum of full matrix algebras, and hence is unital,
whence $\ker\Phi$ is a modular ideal of $L^1(F)$.  Coversely, if $L^1(F)/\ker \Phi$
admits an identity, $q+\ker\Phi$, then $q^*+\ker \Phi$ is also the identity so
$p=\Phi(q)$ is a projection in $L^1(G)$.
Furthermore, $\Phi(L^1(F))\subseteq p\ast L^1(G)\ast p$.  Hence by the method of proof of
Theorem 3.8 of \cite{ross1}, $\Phi$ corresponds to a bounded homomorphism
$\Phi_M:M(F)\to p\ast M(G)\ast p=p\ast L^1(G)\ast p$ and hence to a continuous homomorphism
$\phi:F\to\mathbb{M}_p$, which, in turn, gives the form $\Phi=\Phi_p$, as in (\ref{eq:strucPhip}).
\end{proof}

\bibliographystyle{plain}
\bibliography{Projbib}

\end{document}